\newtheorem{theorem}{Theorem}[section]
\newtheorem{lemma}[theorem]{Lemma}
\newtheorem{corollary}[theorem]{Corollary}
\theoremstyle{remark}
\newtheorem{remark}[theorem]{Remark}	
\newtheorem{conjecture}[theorem]{Conjecture}
\theoremstyle{definition}
\newcommand\Z{\mathbb{Z}}
\newcommand\Q{\mathbb{Q}}
\newcommand\R{\mathbb{R}}
\newcommand\C{\mathbb{C}}
\renewcommand\P{\mathbb{P}}
\newcommand\alg{\mathrm{a}}
\newcommand\T{\mathbb{T}}
\newcommand\eps{\varepsilon}
\renewcommand\O{\mathcal{O}}
\newcommand\pid{\mathfrak{p}}
\newcommand\aid{\mathfrak{a}}
\newcommand\ld{\mathfrak{d}}
\newcommand\arch{\mathrm{arch}}
\newcommand\disc{\operatorname{disc}}
\newcommand\Fermat{\mathcal{F}}
\begin{document}
\title{Two Restricted ABC Conjectures}
\author{Machiel van Frankenhuijsen}
\address{Department of Mathematics,
Utah Valley University,
Orem,
Utah 84058-5999,
vanframa@uvu.edu}

\keywords{Uniform abc conjecture, effective abc conjecture, restricted abc conjecture, Fermat curve, Belyi function.}

\begin{abstract}
Ellenberg proved that the abc conjecture would
follow if this conjecture were known for sums $a+b=c$ such that $D\mid abc$ for some integer~$D$.
Mochizuki proved a theorem with an opposite restriction,
that the full abc conjecture would follow if it were known for abc sums that are not highly divisible.
We prove both theorems for general number fields.
\end{abstract}

\maketitle

\section{Introduction}

The abc conjecture,
formulated in 1985 by Masser and Oesterl\'e,
states that for all $\eps>0$,
there exists a constant $M$ such that
\begin{gather}\label{E:abc}
\log c<\sum_{p\mid abc}\log p+\eps\log c+M
\end{gather}
for all sums $a+b=c$ of positive coprime integers (briefly,
 an ``abc sum'').
Here,
the {\em radical\/}
of the abc sum involves
 all prime divisors $p$ of $abc$,
\[
r(a,b,c)=\sum_{p\mid abc}\log p,
\]
and the {\em height\/} of the abc sum is
\[
h(a,b,c)=\log c.
\]
These quantities will be defined for general number fields in Section~\ref{S:hr}.

In 1999,
Ellenberg showed that it would suffice to prove this conjecture for sums such that $abc$ is divisible by a fixed integer.
His result~\cite{E} is that
if $D$ is a positive integer and $M$ is a constant such that~\eqref{E:abc} holds
for all abc sums such that~$D\mid abc$,
then
for all abc sums,
\begin{gather*}
h(a,b,c)<r(a,b,c)+n\eps h(a,b,c)+M+\log(2n2^n),
\end{gather*}
where $n$ is the Euler indicator
$n=D\prod\nolimits_{p\mid D}\bigl(1-1/p\bigr)$.

Ellenberg proved his result for integers,
but his argument applies equally well to number fields.
For number fields,
$D$ is naturally an arithmetic divisor
(defined in Section~\ref{S:E}),
because the condition $D\mid abc$ is trivially satisfied for all primes with~\mbox{$p\nmid D$}.

We explain several formulations of the abc conjecture in Section~\ref{S:abc},
and formulate and prove the generalization of Ellenberg's theorem in Section~\ref{S:E}.
The idea of the proof is to replace an abc sum
$(x:1-x:1)$ by $(x^n:1-x^n:1)$,
related to the cover $x\mapsto x^n$ from $\P^1$ to $\P^1$.

\begin{remark}
In~\cite{E},
the abc sum is replaced by $A+B=C$,
where $A=(a-b)^n$,
$B=c^n-(a-b)^n$,
and $C=c^n$.
This is related to the cover $x\mapsto(2x-1)^n$ of $\P^1$.
With our choice of cover,
we have a slightly stronger estimate at the prime $2$.
\end{remark}

When we try to include the archimedean valuations in the conditions imposed by $D$,
we encounter the problem that power maps are expansive near $1$,
making it hard to simultaneously satisfy conditions at different archimedean places.
If the conjugates of $a$ lie on the unit circle,
then methods from Diophantine approximation yield an exponent so that the conjugates lie close to $1$.
On the other hand,
conjugates may lie at a distance of $e^{-h(a,b,c)}$ from the unit circle,
and then an exponent of size $e^{h(a,b,c)}$ is needed to make the conjugates diverge to $0$ or $\infty$.
With an exponent depending on the height,
we have been unable to guarantee that the error is of order $o(h)$.
For this reason,
Theorem~\ref{T:E} is formulated with only one archimedean condition.

\begin{remark}
The absolute value of an algebraic number is real algebraic.
By Roth's theorem,
if an algebraic number does not lie on the unit circle,
then it must lie at a certain minimal distance.
This may make it possible to estimate a minimal rate at which powers must diverge from the unit circle.
\end{remark}

In 2009,
Mochizuki proved that the abc conjecture follows from a weaker conjecture in the opposite direction.
To introduce Mochizuki's result,
we state it for sums of coprime rational integers.
We write $v_p(n)$ for the exponent of $p$ in the prime factorization of $n$.

Let $V$ be a finite set of such valuations.
Assume that for every even\footnote{See the text after~\eqref{E:G_v} for an explanation why $G$ must be even if $v_2\in V$.} integer~$G$,
there exists a function~\mbox{$\psi_G$} such that $\psi_G(h)=o(h)$ and
\begin{gather}\label{E:res abc}
h(a,b,c)<r(a,b,c)+\psi_G(h(a,b,c))
\end{gather}
for every abc sum such that for every~$v_p$ in $V$,
$v_p(abc)\leq v_p(G)$.
Mochizuki's result~\cite{M} is that then,
 for every $\eps>0$,
there exists a real number $M$ such that~\eqref{E:abc} holds for all abc sums.

The condition $v_p(abc)\leq v_p(G)$ means that~$a/c$ is $p$-adically not very close to the {\em tripod\/} of three points on the projective line (see Section~\ref{S: tripod}),
\begin{gather}\label{E:tripod}
\T=\{0,1,\infty\}.
\end{gather}
Since $a/c$ will be close to $\T$ for some primes,
this condition can only be imposed for the finitely many valuations in $V$.
In other words,~$G$
is naturally a neighborhood of $\T$ in a finite product of completions of~$\Q$.

The idea of the proof is to lift an abc sum to a point on the Fermat curve,
\begin{gather}\label{E:Fermat}
\Fermat\colon x^n+y^n=1,
\end{gather}
for a value of the exponent that is large enough,
depending on $\eps$,
\begin{gather}\label{E:n}
n\geq3+\frac6\eps,
\end{gather}
using the map $(x,y)\mapsto x^n$ from $\Fermat$ to $\P^1$.
This is a cover of the projective line by a hyperbolic curve of degree $n^2$.
Thus already to deduce the abc conjecture for $\Q$,
we need to assume~\eqref{E:res abc} for certain algebraic number fields of degree at most $n^2$ over $\Q$.
We refer to Theorem~\ref{T:M} for the complete formulation.

\begin{remark}
The value for~$n$ in Theorem~\ref{T:E}
depends on $D$.
On the other hand,
in Section~\ref{S:M},~$n$ is chosen to satisfy~\eqref{E:n} and does not depend on $G$.
\end{remark}

The paper~\cite{M} proves that the restricted abc conjecture implies Voj\-ta's Height Inequality (VHI,
see~\cite{V}),
 and leaves the abc conjecture as the special case
of VHI applied to~$\T$.
Since the abc conjecture implies VHI effectively (see~\cite{abcvhi,vF}),
we prove here only that Mochizuki's restricted abc conjecture implies the abc conjecture.
This simplification allows us to fix the choice of height functions and we only need~\cite[Proposition~1.7(i)]{M} for the Fermat curve.
Another technical simplification is in the use of a $p$-adic valuation to separate noncritical points from ramified points of a Belyi map in Section~\ref{S:Belyi},
where~\cite{Belyi} uses the archimedean valuation (i.e.,
the ordering of $\Q$ as real numbers).
We systematically write the error term as a function of the height,
which better clarifies our choice of $\eps$.
Apart from these changes in the presentation,
we have not extended or improved upon~\cite[Theorem~2.1]{M}.

\subsection{Acknowledgements}

In~\cite{E},
Ellenberg mentions that his result was known to the experts,
 and he cites~\cite{O}.
We call it Ellenberg's theorem,
but it should probably be attributed to Szpiro and Oesterl\'e.

We thank Mochizuki for many insightful discussions,
and the Research Institute for Mathematical Sciences of the University of Kyoto for their hospitality from January until August of 2018.

\section{The height and the radical}
\label{S:hr}

In a number field $K$,
a rational prime number $p$ factors as a product of prime ideals of the ring $\O_K$ of integers of $K$,
\begin{gather}\label{E:e}
p\O_K=\pid_1^{e_1}\cdots\pid_g^{e_g}.
\end{gather}
Each prime ideal defines an extension $\O_K/\pid_i$ of the finite field $\Z/p\Z$ of a degree
denoted by $f_i$.
The cardinality of this finite field is called the {\em norm\/} of the prime ideal,
\[
N\pid_i=p^{f_i}=\#(\O_K/\pid_i).
\]

A prime ideal defines a {\em$\pid$-adic valuation\/} $v_\pid$,
defined as the exponent of $\pid$ in the prime factorization of an ideal,
$
\aid=\prod_\pid\pid^{v_\pid(\aid)}$.
Conversely,
a valuation determines its prime ideal
\mbox{$
\pid_v=\{x\in\O_K\colon v(x)>0\}
$}.

The prime factors $\pid_i$ of $p$ correspond to embeddings of $K$ in~$\C_p$.
There are also embeddings in the field of complex numbers,
and each embedding $\imath\colon K\rightarrow\C$ gives an {\em archimedean valuation,\/}
\[
v_{\imath}(x)=-\log|\imath x|.
\]
An archimedean valuation does not define a prime ideal,
but after~\eqref{E:sum} below,
we formally define its norm.

If the embeddings $\imath$ and $\jmath$ are related by the continuous automorphism of~$\C$ (complex conjugation),
then the corresponding valuations are the same on $K$.
Likewise,
two embeddings of $K$ into $\C_p$ define the same valuation (and the same prime ideal of $\O_K$) if and only if they are related by a continuous automorphism of $\C_p$.
There are~$e_\pid f_\pid$ automorphisms of $\C_p$ (and embeddings of $K$) corresponding to the $\pid$-adic valuation.

We use the subscript $v$ for nonarchimedean valuations.
On the other hand,
the subscript $\imath$ stands for an embedding in $\C$,
and a pair of complex embeddings $\imath,\bar\imath$ corresponds to one archimedean valuation of $K$.
To count only valuations,
we could use the notation $v_{\imath,\bar\imath}(x)=-2\log|\imath x|$ to refer to the complex archimedean valuations (where $2=e_\imath f_\imath$;
also see~\cite[III.3, page~222]{Neukirch}).
We have not done this in this paper,
and have instead kept the inconsistency that archimedean valuations are counted by the embeddings of $K$ into $\C$.

The sum of all $p$-adic valuations and archimedean valuations is the trivial valuation:
for all nonzero $x\in K$,
\begin{gather}\label{E:sum}
\sum_\imath v_\imath(x)+\sum_vv(x)\log N\pid_v=0.
\end{gather}
We interpret this to mean that $\log N\pid_\imath=1$ if $\imath K$ is a subfield of $\R$,
and~$\log N\pid_{\imath,\bar\imath}=2$ if~$\imath K$ is not a real subfield.\medskip

For a point $(a,b,c)$ over $K$,
 the {\em local height\/} for each valuation is
\[
h_v(a,b,c)=-(\log N\pid_v)\min v\{a,b,c\},
\]
where $v\{a,b,c\}$ is the set of valuations of $a$,
$b$ and $c$.
For the archimedean valuations,
we use this formula as well,
with the norm~\mbox{$\log N\pid_\imath=1$} for each embedding in $\C$.
Each archimedean contribution can be written as
\begin{gather}\label{E:h_i}
h_\imath(a,b,c)=\log\max\{|\imath a|, |\imath b|, |\imath c|\}.
\end{gather}

The {\em global height\/} is defined as
\[
h_K(a:b:c)=\sum_{\imath}h_\imath(a,b,c)+\sum_vh_v(a,b,c).
\]

It follows from~\eqref{E:sum} that the global height only depends on $(a,b,c)$ up to multiples:
$h_K(a:b:c)=h_K(xa:xb:xc)$ for any nonzero~\mbox{$x\in K$}.
Thus~$h_K$ depends on the projective point,
as the notation indicates.

\begin{remark}\label{R:FS metric}
We could define the archimedean contributions to the height more naturally as
\[
h^\circ_\imath(a,b,c)=\log\sqrt{|\imath a|^2+ |\imath b|^2+ |\imath c|^2},
\]
using the spherical distance on the Riemann sphere $\P^1(\C)$.
This differs from our choice~\eqref{E:h_i} by a bounded function.
\end{remark}

For each nonarchimedean valuation,
the {\em local radical\/} of $(a,b,c)$ is
\[
r_v(a:b:c)=\begin{cases}
\log N\pid_v	&\text{ if }\ \#v\{a,b,c\}\geq2,\\
0		&\text{ if }\ v(a)=v(b)=v(c).
\end{cases}
\]
Clearly,~$r_v$ depends only on the projective point.
For each embedding of $K$ in $\C$,
the archimedean contribution to the radical is defined as
\[
r_\imath(a:b:c)=1,
\]
because $\#\{|\imath a|,|\imath b|,|\imath c|\}\geq2$ always (with only one exception,
see~\eqref{E: rho sum} below).
The {\em global radical\/} is
\begin{gather}\label{E:r_K}
r_K(a:b:c)=[K:\Q]+\sum_vr_v(a:b:c).
\end{gather}

The radical counts the valuations where the orders of $a$,
$b$ and $c$ are not all equal,
 with a weight $\log N\pid_v$.
The degree $[K:\Q]$ in~\eqref{E:r_K} is the number of embeddings of~$K$ in $\C$,
and each embedding contributes $1$.\medskip

The following two lemmas are needed for our generalization of Ellenberg's theorem.

\begin{lemma}\label{L: powers height}
For\/ $a\in K$ and a positive integer\/ $n,$
\[
h_K(a^{n-1}+a^{n-2}+\dots+1:1)\leq [K:\Q]\log n+(n-1)h_K(a:1).
\]
\end{lemma}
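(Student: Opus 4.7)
The plan is to bound $h_K(S:1)$ for $S=a^{n-1}+a^{n-2}+\dots+1$ one valuation at a time, using the ultrametric inequality at the nonarchimedean places and the triangle inequality at the archimedean places, and then to sum over all valuations.

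First I would handle the nonarchimedean contribution. For a nonarchimedean valuation $v$, the ultrametric inequality gives $v(S)\geq\min_{0\leq i\leq n-1}v(a^i)=(n-1)\min\{v(a),0\}$, since the minimum of $iv(a)$ over $0\leq i\leq n-1$ is attained at $i=n-1$ when $v(a)<0$ and at $i=0$ when $v(a)\geq0$. Splitting into these two cases, one checks that in either case $-\min\{v(S),0\}\leq-(n-1)\min\{v(a),0\}$, and multiplying by $\log N\pid_v$ yields $h_v(S:1)\leq(n-1)h_v(a:1)$.

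Next I would handle an archimedean embedding $\imath\colon K\rightarrow\C$. The triangle inequality gives
\[
|\imath S|\leq\sum_{i=0}^{n-1}|\imath a|^i\leq n\max\{|\imath a|,1\}^{n-1},
\]
so $\max\{|\imath S|,1\}\leq n\max\{|\imath a|,1\}^{n-1}$, whence $h_\imath(S:1)\leq\log n+(n-1)h_\imath(a:1)$ from the formula~\eqref{E:h_i}.

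Finally, summing over all valuations as in the definition of $h_K$, the $(n-1)$ factors combine to $(n-1)h_K(a:1)$, while each of the $[K:\Q]$ archimedean embeddings contributes an additional $\log n$, yielding the stated bound. There is no essential obstacle here; the only point that requires a moment of care is the verification that the ultrametric estimate for $v(S)$ is sharp enough in both the cases $v(a)\geq0$ and $v(a)<0$, so that the nonarchimedean terms incur no extra $\log n$ contribution.
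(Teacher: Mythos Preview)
Your proof is correct and follows essentially the same approach as the paper's: bound each local height separately, using the ultrametric inequality at nonarchimedean places and the triangle inequality at archimedean places, then sum. The only cosmetic difference is that the paper notes the nonarchimedean contribution is actually an equality, $h_v(S,1)=(n-1)h_v(a,1)$ (since for $v(a)<0$ the term $a^{n-1}$ strictly dominates), whereas you only record the inequality; your version is of course sufficient for the lemma.
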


\begin{proof}
Clearly,
$h_v(a^{n-1}+\dots+1,1)=(n-1)h_v(a,1)$ for every nonarchimedean valuation.
For archimedean valuations,
\[
\max\{|\imath a^{n-1}+\dots+1|,1\}\leq\begin{cases}
n|\imath a^{n-1}|	&\text{ if }|\imath a|>1,\\
 n				&\text{ if }|\imath a|\leq1.
 \end{cases}
\]
It follows that
$h_\imath(a^{n-1}+\dots+1,1)\leq\log n+(n-1)h_\imath(a,1)$.
Adding all contributions,
the lemma follows.
\end{proof}

This lemma and the next have an even nicer formulation using the absolute height (i.e.,
when divided by $[K:\Q]$),
to be introduced in Section~\ref{S:absolute} below.
The content of both lemmas becomes particularly clear when we view the height as the degree of a cover
(see Section~\ref{S: tripod} below and Remark~\ref{R:FF}).

\begin{lemma}\label{L:n height}
For every algebraic number\/ $a\in K,$
\[
h_K(a^n:1-a^n:1)\leq nh_K(a:1-a:1)+[K:\Q]\log2,
\]
and
\[
h_K(a^n:1-a^n:1)\geq nh_K(a:1-a:1)-[K:\Q]\cdot n\log2.
\]
\end{lemma}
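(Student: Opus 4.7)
The approach is to compare $h_v(a^n,1-a^n,1)$ with $h_v(a,1-a,1)$ place by place, using the factorization
\[
1-a^n=(1-a)(1+a+a^2+\cdots+a^{n-1}).
\]

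At a nonarchimedean valuation $v$, I would split on the sign of $v(a)$. If $v(a)<0$, the ultrametric inequality gives $v(1-a)=v(a)$ and $v(1-a^n)=nv(a)$, so the two minima equal $v(a)$ and $nv(a)$ respectively. If $v(a)\ge 0$, then every power $a^k$ has nonnegative valuation, and hence so do $1+a+\cdots+a^{n-1}$ and $1-a^n$; both minima are then $0$. In either case $\min v\{a^n,1-a^n,1\}=n\min v\{a,1-a,1\}$, giving the exact equality $h_v(a^n,1-a^n,1)=n\,h_v(a,1-a,1)$. The nonarchimedean places therefore contribute no error to either inequality.

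At an archimedean embedding $\imath$, write $x=\imath a$ and $M=\max\{|x|,|1-x|,1\}\ge 1$. The target is the sandwich
\[
2^{-n}M^n\le\max\{|x|^n,|1-x^n|,1\}\le 2M^n.
\]
The upper bound is immediate: $|x|^n\le M^n$, $1\le M^n$, and $|1-x^n|\le 1+|x|^n\le 2M^n$ (treat $|x|\le 1$ and $|x|>1$ separately). For the lower bound, when $|x|\ge 1$ the inequality $|1-x|\le 1+|x|\le 2|x|$ gives $M\le 2|x|$, whence $|x|^n\ge 2^{-n}M^n$; when $|x|<1$ one has $|1-x|<2$, so $M<2$ and $M^n<2^n$, while the middle quantity is already $\ge 1$. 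Taking logarithms and summing over the $[K:\Q]$ complex embeddings produces archimedean errors of $+[K:\Q]\log 2$ above and $-n\,[K:\Q]\log 2$ below. Combined with the nonarchimedean equalities, these are precisely the two stated inequalities.

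The main obstacle, and the source of the asymmetry between the constants, is the extremal case $x=-1$: there $M=2$ while $|x|^n=1$ and $|1-x^n|\in\{0,2\}$, so the ratio $\max\{|x|^n,|1-x^n|,1\}/M^n$ can be as small as $2^{1-n}$. This forces the factor $n$ in the lower bound's constant $n\log 2$; a bound of the form $nh_K(a:1-a:1)-C[K:\Q]$ with $C$ independent of $n$ is not available through this direct archimedean case analysis.
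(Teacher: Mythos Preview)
Your proof is correct and follows essentially the same route as the paper: exact equality at the nonarchimedean places, and an archimedean sandwich with errors $\log 2$ and $n\log 2$ per embedding. The paper compresses your archimedean case analysis into the single inequality $\max\{|a|,1\}\le\max\{|a|,|1-a|,1\}\le 2\max\{|a|,1\}$, which, combined with $\max\{|a^n|,1\}=\max\{|a|,1\}^n$, immediately yields both bounds; your split on $|x|\ge 1$ versus $|x|<1$ is exactly the proof of that inequality unpacked. (One small remark: the factorization $1-a^n=(1-a)(1+a+\cdots+a^{n-1})$ announced in your plan is never actually used; you may want to drop it from the opening sentence.)
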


\begin{proof}
Clearly,
$h_v(a^n,1-a^n,1)=nh_v(a,1-a,1)$ for each nonarchimedean valuation.
For the archimedean valuations we use the inequality
$
\max\{|a|,1\}\leq\max\{|a|,|1-a|,1\}\leq 2\max\{|a|,1\}
$
for a complex number\/ $a,$
to estimate
$
h_\imath(a^n,1-a^n,1)\geq nh_\imath(a,1-a,1)-n\log2,
$
and similarly in the opposite direction.
\end{proof}

\subsection{The tripod zero, one and infinity}
\label{S: tripod}

It is not always possible to cancel a common factor from a sum $a+b=c$ over $K$ to obtain a point with relatively prime integral coordinates.
Another point of view is obtained when one divides by $c$ to obtain a sum of algebraic numbers
\begin{gather}\label{E:a+b=1}
a+b=1.
\end{gather}

The height can be computed as $h_K(a:b:1)$.
Each nonarchimedean local height $h_v(a,b,1)=0$ unless $v(a)<0$,
in which case we say that~``$a$ has a pole at $v$''.
Such valuations are counted in the radical,
along with the valuations where~$v(a)>0$ (``$a$ has a zero at $v$'') or $v(1-a)>0$ (``$a$ takes the value $1$ at $v$'').
With this point of view,
the height of a sum~\eqref{E:a+b=1} is the degree of the pole divisor of $a$.

In the function field case,
the radical is the number of points in the preimage $a^{-1}\T$,
where $\T$ is the divisor of the three points $0$,
$1$,~$\infty$ as in~\eqref{E:tripod}.
For number fields,
`points' are valuations,
 counted with a weight of $\log N\pid_v$.
The archimedean valuations are always counted (with weight $1$ if $\imath K\subset\R$ or $2$ if~$\imath K\not\subset\R$) because usually~$|\imath a|$ or~$|\imath b|\ne1$.
The only exception is the sum of sixth roots of unity in~$K=\Q[\sqrt{-3}\,]$,
where we have defined the radical of the sum
\begin{gather}\label{E: rho sum}
e^{\pi i/3}+e^{-\pi i/3}=1
\end{gather}
as
$
r_K(e^{\pi i/3}:e^{-\pi i/3}:1)=2$,
but maybe it should be defined as $0$ ?\medskip

The following lemma is needed in the proof of Mochizuki's theorem.

\begin{lemma}\label{L:r<h}
For\/ $a\in K$ and not in\/ $\T,$
\[
r_K(a:1-a:1)\leq3[K:\Q]+ 3h_K(a:1-a:1).
\]
\end{lemma}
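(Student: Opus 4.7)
The plan is to bound $\sum_v r_v(a:1-a:1)$ place-by-place against the three ``tripod heights'' of $a$, namely $h_K(a:1)$, $h_K(1/a:1)=h_K(a:1)$, and $h_K(1-a:1)$, each of which is at most $h_K(a:1-a:1)$. The archimedean term $[K:\Q]$ already built into~\eqref{E:r_K} then supplies the additive constant.

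The key step is a case analysis at a nonarchimedean place $v$ with $r_v\neq 0$. Since $a+(1-a)=1$ and $v(1)=0$, the nonarchimedean triangle inequality forces exactly one of three mutually exclusive situations: $v(a)>0$ (so $v(1-a)=0$), $v(1-a)>0$ (so $v(a)=0$), or $v(a)=v(1-a)<0$. In the first case $h_v(1/a:1)=v(a)\log N\pid_v\geq\log N\pid_v=r_v$; in the second $h_v(1-a:1)\geq r_v$; in the third $h_v(a:1)\geq r_v$. In every case
\[
r_v(a:1-a:1)\leq h_v(a:1)+h_v(1/a:1)+h_v(1-a:1).
\]

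Summing and then passing to global heights requires two elementary observations. First, the nonarchimedean part of any height is at most the full global height, because each archimedean contribution $\log\max\{|\imath x|,1\}$ is nonnegative; combined with the product formula $h_K(1/a)=h_K(a)$, this handles the ``$1/a$'' term. Second, the comparison $h_v(x:1)\leq h_v(a:1-a:1)$ for $x\in\{a,1-a\}$ holds at every place, since enlarging the set inside $\min v\{\cdot\}$ can only decrease it and enlarging the set inside the archimedean $\max$ can only increase it. Together these give
\[
\sum_v r_v(a:1-a:1)\leq 3\,h_K(a:1-a:1),
\]
and adding $[K:\Q]$ from~\eqref{E:r_K} yields $r_K(a:1-a:1)\leq[K:\Q]+3\,h_K(a:1-a:1)$, which implies the stated bound since $[K:\Q]\leq 3[K:\Q]$.

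I do not anticipate a real obstacle: the argument is a place-by-place comparison with no analytic input. The only step demanding a moment's attention is checking that the three nonarchimedean cases are exhaustive when $r_v\neq 0$, and this is immediate, for if $v(a)=v(1-a)=v(1)=0$ then $\#v\{a,1-a,1\}=1$ and $r_v$ vanishes by definition.
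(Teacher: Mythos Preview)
Your case analysis in the second case is wrong. If $v(1-a)>0$ and $v(a)=0$, then
\[
h_v(1-a:1)=-(\log N\pid_v)\min\{v(1-a),0\}=0,
\]
not $\geq\log N\pid_v$. Concretely, take $K=\Q$, $a=3$, $v=v_2$: then $v(1-a)=v(-2)=1$, $r_v=\log2$, but $h_v(a:1)=h_v(1/a:1)=h_v(1-a:1)=0$, so your displayed local inequality fails. The quantity $h_v(1-a:1)$ detects places where $1-a$ has a \emph{pole}, which is the same set as where $a$ has a pole; it does not detect closeness of $a$ to~$1$.

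The fix is to replace $h_v(1-a:1)$ by $h_v\bigl(1/(1-a):1\bigr)=(\log N\pid_v)\max\{v(1-a),0\}$, which is indeed $\geq r_v$ in case~2. Globally $h_K\bigl(1/(1-a):1\bigr)=h_K(1-a:1)\leq h_K(a:1-a:1)$ by the product formula, so the rest of your argument goes through unchanged and even yields the sharper bound $r_K\leq[K:\Q]+3h_K(a:1-a:1)$. This is precisely what the paper does: it groups the nonarchimedean places into the three cases and bounds each group by $h_K$ of a projectively equivalent point (the paper uses $(1/a:1-1/a:1)$, etc.), picking up an extra $[K:\Q]$ each time from the archimedean padding~\eqref{E:r_i}, hence the constant $3[K:\Q]$ rather than your~$[K:\Q]$.
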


\begin{proof}
For any valuation with $v(a)>0$,
the contribution to the radical is bounded by the local height,
\[
r_v(a:1-a:1)\leq(\log N\pid_v)\max\{v(a),0\}=h_v(1/a,1-1/a,1).
\]
And for an embedding in $\C$ we simply add $1$ to get an inequality,
\begin{gather}\label{E:r_i}
1=r_\imath(a:1-a:1)\leq1+h_\imath(1/a,1-1/a,1).
\end{gather}
It follows that the contribution of the valuations where $a$ has a zero is bounded by $[K:\Q]+h_K(1/a:1-1/a:1)$.
The second term equals $h_K(a:1-a:1)$ since $(1/a:1-1/a:1)=(1:1-a:a)$.
The contributions where $v(1-a)>0$ or $v(a)<0$ are bounded similarly.
\end{proof}

\subsection{The absolute height and radical}
\label{S:absolute}

For an extension $E$ of $K$,
the global height
$h_E$ equals $[E:K]\, h_K$ on points with coordinates in~$K$.
The {\em absolute height,\/}
\begin{gather}\label{E:h}
h(a:b:c)=\frac1{[F: \Q]}h_F(a:b:c),
\end{gather}
is defined for every algebraic point and does not
depend on the choice of a field of definition $F$ of the point.\medskip

There is no absolute radical that could be defined independently of the choice of a field of definition.
The reason for this becomes clear in the function field case:
an extension of function fields corresponds to a cover of algebraic curves,
and even if a function lies in the smaller function field,
 on the covering curve it is a new function.
More concretely,
in the function field $\C(x)[y]$ with $y^2=4x^3-g_2x-g_3$,
$x^2$ is not a polynomial but an elliptic function,
with its own branch points.

On algebraic points,
the {\em absolute radical\/} is defined using the minimal field of definition,
\begin{gather}\label{E:abs rad}
r(a:b:c)=\frac1{[F:\Q]}r_F(a:b:c),
\end{gather}
where $F=\Q[a,b,c]$ and coordinates are chosen such that $a$,
$b$ or~$c$ equals~$1$.
For an extension of $F$,
\[
r_K(a:b:c)\leq[K:\Q]r(a:b:c),
\]
with equality if $K/F$ is unramified at each valuation that contributes to the radical.\medskip

Another quantity that depends on the chosen field of definition is the discriminant of the field containing the coordinates of a point.
The {\em logarithmic different\/} is defined by the formula
\begin{gather}\label{E:ld}
\ld(a:b:c)=\frac1{[F:\Q]}\log\disc(F),
\end{gather}
where $\disc F$ is the discriminant of the minimal field of definition of the point $(a:b:c)$.
For an extension,
$\log\disc(K)\geq [K:\Q]\ld(a:b:c)$,
with equality if and only if the extension $K/F$ is unramified.

\begin{remark}\label{R:FF}
For a function field $K$ over a field of constants $k$ of characteristic zero,
the height of an abc sum $a+b=1$ is the degree of~$K$ over the field of rational functions~$k(a)$,
and the radical is the number of points in the preimage $a^{-1}\T$ on the curve defined by $K$.
The analogue of the abc conjecture,
\[
h_K\leq 2g-2+r_K,
\]
where $g$ is the genus of~$K$,
 is a simple consequence of Hurwitz' formula.
The analogue of $2g-2$ for number fields is~$\log\disc(K)$.
\end{remark}

\subsection{The height and radical on a curve}

We only need the height of a point on the Fermat curve~\eqref{E:Fermat},
and this allows us to fix the choice of a height function.

In general,
for a cover $\phi\colon C\rightarrow\P^1$ and a positive divisor $D$ on~$\P^1$,
\begin{gather}\label{E:h phi}
h_{\phi^*D}(x)=h_D(\phi x),
\end{gather}
and we simply define the height with respect to $D$ on $\P^1$ by
\[
h_D(x)=(\deg D)h(x:1-x:1).
\]

The radical with respect to a positive divisor~$D$ on a curve~$C$ is
\[
r_D(x)=\frac1{[F:\Q]}\sum_{v(f(x))>0}\log N\pid_v
\]
where $F$ is the field of definition of the point $x$ and the sum is over all valuations of $F$ where $x$ is $v$-adically close to $D$.
This means that one of the defining equations $f=0$ for the Cartier divisor $D$ has a positive valuation at~$x$.
For example,
the absolute radical defined in~\eqref{E:abs rad} above is~$r_{\T}(a/c)$.
The choice of the equations~$f=0$ is not unique,
but the radical depends on this choice by a bounded function.
It is possible that~$\log N\pid_v$ occurs with a multiplicity in the sum,
namely if~$v(f(x))>0$ for several $f$,
but this happens only for finitely many valuations and with a bounded multiplicity.
For the line $a+b=c$ in~$\P^2$,
if we choose the equations $a/c=0$,
$a/c=1$,
and~$c/a=0$ for the three points of $\T$,
each contribution to the radical occurs once.

Clearly,
$r_D=r_{D'}$,
where $D'$ is the reduced divisor of $D$ (the points of $D$ with multiplicity $1$).
The generalization of Lemma~\ref{L:r<h} is
\begin{gather}\label{E:r D'}
r_D(x)\leq \deg D'+h_{D'}( x).
\end{gather}
The term $\deg D'$ comes from the archimedean valuations.

\section{The abc conjecture}
\label{S:abc}

We write $h_K=h_K(a:b:c)$ and $r_K=r_K(a:b:c)$ for the height and the radical of an abc sum $a+b=c$ in the number field $K$.

\begin{conjecture}\label{C:M}
Let\/ $K$ be a number field and $\eps>0$.
There exists an~$M$ such that for all sums $a+b=c$ in $K,$
\[
h_K\leq r_K+\eps h_K+M.
\]
\end{conjecture}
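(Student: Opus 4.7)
Conjecture~\ref{C:M} is the (unrestricted) abc conjecture for a general number field $K$, and it remains open in its full generality. My proof proposal is therefore conditional: the plan is to deduce it from one of the restricted abc conjectures that form the content of this paper, namely Ellenberg's (Theorem~\ref{T:E} in Section~\ref{S:E}) or Mochizuki's (Theorem~\ref{T:M} in Section~\ref{S:M}).

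\textbf{Via Ellenberg.} Fix $\eps>0$ and choose a large integer $n$ depending on $\eps$. Given an arbitrary abc sum $(a:b:c)$ in $K$, replace it by the pulled-back sum $(a^n:c^n-a^n:c^n)$ under the cover $x\mapsto x^n$. Since
\[
c^n-a^n=(c-a)(c^{n-1}+c^{n-2}a+\cdots+a^{n-1})=b\cdot(c^{n-1}+\cdots+a^{n-1}),
\]
the new sum has a built-in divisor $D$ dividing the product, placing it under the hypothesis of the restricted conjecture. Lemma~\ref{L:n height} compares the new and original heights within controlled error, and Lemma~\ref{L:powers height} bounds the contribution of the cofactor $c^{n-1}+\cdots+a^{n-1}$ to the radical. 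Dividing through by $n$ recovers Conjecture~\ref{C:M} for the original sum, with an adjusted $\eps$ absorbing the discrepancy.

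\textbf{Via Mochizuki.} Choose $n\geq 3+6/\eps$ and lift the abc sum onto the Fermat curve $\Fermat\colon x^n+y^n=1$ via the cover $(x,y)\mapsto x^n$ of degree $n^2$, working in a field extension of $K$ of degree at most $n^2$. This cover is ramified of index $n$ over each point of $\T$, so the pulled-back coordinate is in structure an $n$-th root of $a/c$; hence for any prescribed finite $V$ and suitable $G$, the lifted sum satisfies Mochizuki's hypothesis $v_p(abc)\leq v_p(G)$ for $p\in V$. Applying the restricted hypothesis on this extension and transferring back via~\eqref{E:h phi} and the degree-$n^2$ relation on heights, together with inequality~\eqref{E:r D'} to bound the radical contribution, yields the estimate with $\eps$ of size $O(1/n)$.

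The main obstacle, apart from the fact that the restricted conjectures are themselves open, is the archimedean bookkeeping flagged in the introduction: the power map $x\mapsto x^n$ is expansive near $1$, so one cannot simultaneously impose conditions at many archimedean places. This is precisely why Theorem~\ref{T:E} retains only one archimedean condition, and why careful tracking of the $[K:\Q]\log n$ and $[K:\Q]\cdot n\log 2$ terms from Lemmas~\ref{L:powers height} and~\ref{L:n height} is necessary to ensure the final error is $o(h)$ as $h\to\infty$. For the Mochizuki route there is the additional subtlety that the hyperbolicity of $\Fermat$ forces the restricted conjecture to be assumed on extensions of degree up to $n^2$ over $\Q$, so the deduction is not self-contained within a single number field.
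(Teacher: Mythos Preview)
The statement is a \emph{conjecture}, and the paper does not prove it; there is no ``paper's own proof'' to compare against. You correctly recognise this and instead sketch the two conditional deductions that are the actual content of Theorems~\ref{T:E} and~\ref{T:M}. So the right comparison is between your sketches and those two proofs.

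\textbf{Ellenberg route.} Your outline is in the right spirit but the arithmetic is reversed. The exponent $n$ is not chosen large ``depending on $\eps$'' and one does not ``divide through by $n$'' at the end. Rather, $n$ is forced by the divisor $D$ (it is $n_0$ of~\eqref{E: n_0}, or a bounded multiple thereof), and raising to the $n$-th power multiplies the height by roughly $n$ while the radical grows by at most $(n-1)h+\log n$ (Lemmas~\ref{L: powers height} and~\ref{L:n height}). The restricted hypothesis is then applied at height $\approx nh$, so the error term becomes $\psi_D(Ch+\log 2)$ as in the conclusion of Theorem~\ref{T:E}; in the language of the introduction, restricted abc with $\eps$ yields full abc with $n\eps$. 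The $\eps$ gets \emph{worse}, not better.

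\textbf{Mochizuki route.} Here there is a genuine gap. Lifting to $\Fermat$ does \emph{not} automatically place the image point outside a prescribed neighbourhood $G$ of $\T$: taking an $n$-th root of $a/c$ divides the $v$-adic distance to $0$ or $\infty$ by $n$, but this cannot beat an arbitrary fixed $g_v$ when $a/c$ is very close to $\T$. The paper's actual mechanism (Lemma~\ref{L:D=0}) is a compactness-and-contradiction argument: assume infinitely many counterexamples, pass to a subsequence converging in each $\P^1(\C_v)$ for $v\in V$, and then invoke the \emph{noncritical} Belyi maps of Section~\ref{S:Belyi} to choose a $\phi$ sending all limit points away from $\T$; only then can one pick $G$ so that the restricted hypothesis~\eqref{E:abc general position} applies. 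The transfer back to $\P^1$ further requires the Chevalley--Weil type estimates of Lemmas~\ref{L:C P} and~\ref{L:C P e} to control $\ld$ and $r$. None of this appears in your sketch, and without it the argument does not go through.
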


It is known that $M$ must be unbounded as $\eps\to0$.
If $M$ is effectively known as a function of $\eps$ then
\begin{gather}\label{E:psi from h}
\psi(h)=\inf_{\eps>0}\eps h+M(\eps)
\end{gather}
is the {\em error term\/} in the abc conjecture.
A function~$\psi$ thus obtained satisfies
$
\psi(h)=o(h),
$
i.e.,
$\psi(h)/h\to0$ as $h\to\infty$,
but in the following conjecture,
we also allow $\psi(h)=\eps h+M$.
For instance,
for $\eps=1/2$ and $K=\Q$,
it is likely that~\mbox{$M(1/2)=0$} is possible,
because every abc sum of rational numbers so far has satisfied~$h_\Q\leq 2r_\Q$.

\begin{conjecture}[Effective abc]\label{C:psi}
Let\/ $K$ be a number field.
There exists a function~$\psi$ such that for all sums $a+b=c$ in $K,$
\[
h_K\leq r_K+[K:\Q]\psi(h),
\]
where $
h=h_K/[K:\Q]$
is the absolute height of $(a:b:c)$,
as in~\eqref{E:h}.
\end{conjecture}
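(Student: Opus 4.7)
The final statement is the effective form of the abc conjecture and is open, so my proposal is necessarily programmatic: reduce it to the classical form in Conjecture~\ref{C:M} and then to the restricted abc conjectures treated earlier in the paper. The plan is in two steps. First, I would derive Conjecture~\ref{C:psi} from Conjecture~\ref{C:M} by the infimum construction already anticipated in~\eqref{E:psi from h}. Second, I would invoke the paper's reductions, Theorem~\ref{T:E} or Theorem~\ref{T:M}, as the pathway to Conjecture~\ref{C:M} itself.

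For the first step, fix~$K$ and, assuming Conjecture~\ref{C:M}, set
\[
\psi(h):=\inf_{\eps>0}\Bigl(\eps h+M(\eps)/[K:\Q]\Bigr).
\]
Given any abc sum $a+b=c$ in $K$, Conjecture~\ref{C:M} supplies $h_K\le r_K+\eps h_K+M(\eps)$ for every $\eps>0$; dividing by $[K:\Q]$ and writing $h=h_K/[K:\Q]$ gives $(h_K-r_K)/[K:\Q]\le\eps h+M(\eps)/[K:\Q]$, and taking the infimum over $\eps$ on the right yields $h_K\le r_K+[K:\Q]\,\psi(h)$ as required. The $\psi$ constructed this way satisfies $\psi(h)=o(h)$ as noted after~\eqref{E:psi from h}, so the effective inequality holds with an error term of the form the paper permits.

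The second step is the true obstacle: Conjecture~\ref{C:M} is the abc conjecture, which is open. Within the framework of this paper the natural attack is through Theorem~\ref{T:M}, which reduces Conjecture~\ref{C:M} to Mochizuki's restricted inequality~\eqref{E:res abc} for abc sums that are not highly divisible at a prescribed finite set of valuations; establishing that restricted inequality would, by the first step, complete the proof of Conjecture~\ref{C:psi}. The hard part will be that Theorem~\ref{T:M} needs~\eqref{E:res abc} uniformly over number fields of degree at most $n^2$ with $n$ as in~\eqref{E:n}, because abc sums are lifted to the Fermat curve~\eqref{E:Fermat} via $(x,y)\mapsto x^n$: any estimate must be uniform in the field of definition, with the discriminant contribution of these auxiliary extensions absorbed into $\psi$ through the logarithmic different $\ld$ of~\eqref{E:ld}. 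Managing this field-uniform error, rather than carrying out the reduction, is where the proposal meets the open frontier.
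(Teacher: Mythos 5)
This is a conjecture, not a theorem, so the paper offers no proof to compare against; it is stated as one of several formulations of the abc conjecture and related informally to Conjecture~\ref{C:M} via the infimum formula~\eqref{E:psi from h}. Your first step, deriving Conjecture~\ref{C:psi} from Conjecture~\ref{C:M} by setting $\psi(h)=\inf_{\eps>0}\bigl(\eps h+M(\eps)/[K:\Q]\bigr)$, reproduces that observation with the correct $[K:\Q]$ normalization, and your check that this $\psi$ gives the claimed inequality and is $o(h)$ is sound. Your second step is a fair account of the paper's architecture: Theorems~\ref{T:E} and~\ref{T:M} reduce the full abc conjecture to restricted versions, and those restricted hypotheses remain unproved, so the proposal rightly stops at the open frontier.

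Two small points of precision. Theorem~\ref{T:M} delivers its conclusion in the uniform form of Conjecture~\ref{C:disc}, namely $h(P)\leq\ld(P)+r(P)+\eps h(P)+M$; for a fixed number field $K$ this yields Conjecture~\ref{C:M} only after absorbing the bounded term $\ld(P)$ into the constant, so you should not cite Theorem~\ref{T:M} as producing Conjecture~\ref{C:M} directly. Also, the paper deliberately allows $\psi(h)=\eps h+M$ in Conjecture~\ref{C:psi}, since extracting a genuine $o(h)$ error term via~\eqref{E:psi from h} presupposes that $M(\eps)$ is known as a function of $\eps$, which is the issue signaled by the label ``effective''. Your infimum construction is logically correct (any proof of Conjecture~\ref{C:M} yields some choice of $M(\eps)$), but it is worth flagging that the adjective ``effective'' marks this epistemic gap rather than an additional mathematical hypothesis.
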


In an extension $E/K$ and a point defined over $K$,
$r_E\leq [E:K]r_K$,
with equality only if $E/K$ is unramified at each valuation that contributes to the radical.
Thus the function $\psi$ in Conjecture~\ref{C:psi} must depend on~$K$.
On the other hand,
according to~\cite[(3.3)]{abcvhi},
\[
\log\disc(E)+r_E(a:b:c)\geq [E:K]\bigl(\log\disc(K)+r_K(a:b:c)\bigr).
\]
With the addition of the discriminant,~$\psi$ could be independent of $K$.
The strongest inequality is obtained when~$K$ is the minimal field of definition of an abc sum.

\begin{conjecture}[Uniform abc]\label{C:disc}
There exists a function~$\psi$ such that for all sums $a+b=c$ of algebraic numbers,
\[
h_K\leq \log\disc(K)+r_K+[K:\Q]\psi(h),
\]
where $K$ is any field of definition of the point $(a:b:c)$.

With the absolute radical~\eqref{E:abs rad} and logarithmic different~\eqref{E:ld},
this conjecture can be formulated as
\[
h\leq \ld+r+\psi(h)
\]
for all algebraic points $(a:b:c)$ on the line $a+b=c$.
\end{conjecture}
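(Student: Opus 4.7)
The plan is to separate the statement into (i) the equivalence of the two formulations, which admits an actual proof from material already collected in Sections~\ref{S:hr}--\ref{S:abc}, and (ii) the uniform inequality itself, which is genuinely open and for which I can only indicate the natural strategy. The key input for (i) is the inequality $\log\disc(E)+r_E(a:b:c)\geq[E:K]\bigl(\log\disc(K)+r_K(a:b:c)\bigr)$ from~\cite[(3.3)]{abcvhi} that was quoted in the paragraph following Conjecture~\ref{C:psi}, together with the normalizations~\eqref{E:h}, \eqref{E:abs rad}, and~\eqref{E:ld}.

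First I would check the direction from the $K$-dependent formulation to the absolute one. Applied to $K=F$, the minimal field of definition of $(a:b:c)$, division by $[F:\Q]$ turns $h_F$, $\log\disc(F)$, and $r_F$ into $h$, $\ld$, and $r$ respectively, giving $h\leq\ld+r+\psi(h)$. Conversely, for an arbitrary field of definition $K\supseteq F$, multiplication of the absolute inequality by $[K:\Q]$ together with $h_K=[K:\Q]h$ gives $h_K\leq[K:\Q](\ld+r)+[K:\Q]\psi(h)$, and the cited inequality applied with $E=K$ over $F$ shows $[K:\Q](\ld+r)=[K:F]\bigl(\log\disc(F)+r_F\bigr)\leq\log\disc(K)+r_K$, which recovers the first formulation.

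For (ii), the natural approach is to reduce to Conjecture~\ref{C:psi} over a fixed base field by lifting an arbitrary algebraic point along a Belyi- or Fermat-type cover $\phi\colon C\rightarrow\P^1$ ramified only over $\T$, in the spirit of Mochizuki's argument in Section~\ref{S:M} and using~\eqref{E:h phi} and~\eqref{E:r D'} to transfer heights and radicals. The Hurwitz analogy of Remark~\ref{R:FF} suggests that $\log\disc(K)$ should absorb the ramification contribution, playing the role of $2g-2$ on the cover, while $r_K$ picks up the unramified part of $\phi^{-1}\T$. The main obstacle is uniformity in the degree of $K$: to keep $\psi$ independent of the point, one needs a choice of cover whose degree and ramification do not blow up with the complexity of the field of definition, and at present this can only be done at the cost of an error proportional to $[K:\Q]$ rather than $o(h)$, which is precisely why Conjecture~\ref{C:disc} remains open.
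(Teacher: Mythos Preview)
This statement is a \emph{conjecture} in the paper; no proof is given, and the second displayed inequality is simply asserted to be a reformulation of the first. Your proposal treats this correctly: part~(i) supplies the equivalence argument that the paper leaves implicit, using exactly the ingredients the paper has just set up---the inequality $\log\disc(E)+r_E\geq[E:K](\log\disc(K)+r_K)$ from~\cite[(3.3)]{abcvhi} quoted immediately before the conjecture, together with the normalizations~\eqref{E:h}, \eqref{E:abs rad}, \eqref{E:ld}---and both directions are sound. Part~(ii) rightly identifies the inequality itself as open.

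One clarification on your sketch in~(ii): the Fermat-cover strategy you outline is precisely what Section~\ref{S:M} carries out, but there it derives the abc inequality \emph{conditionally} from the restricted hypothesis~\eqref{E:abc general position}, not Conjecture~\ref{C:disc} unconditionally. The paper does not propose any approach to the uniform conjecture; it is stated as a target, and your diagnosis of the obstacle (uniformity of the error in $[K:\Q]$) is accurate but is your own commentary rather than a comparison point.
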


Numerical evidence suggests that $M$ in Conjecture~\ref{C:M} may grow like~$4/\eps$.
By~\eqref{E:psi from h},
$\psi(h)$ could then be $4\sqrt{h}$ as in the next conjecture.

\begin{conjecture}[Falsifiable abc]\label{C:sqrt}
For all sums $a+b=c$ of algebraic numbers,
$
h\leq\ld+r+4\sqrt{h}.
$
\end{conjecture}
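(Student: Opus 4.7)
The final statement is a conjecture, not a theorem, so I cannot actually hope to give a complete proof; what I can do is sketch a conditional derivation together with the gap that keeps it open. The plan is to deduce Conjecture~\ref{C:sqrt} from an effective (and explicit) form of Conjecture~\ref{C:disc}, by running the optimization~\eqref{E:psi from h} with the numerical guess $M(\eps)=4/\eps$.

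First, suppose that for every $\eps>0$ there is a constant $M(\eps)$, valid simultaneously for all algebraic points on $a+b=c$ and all fields of definition $K$, such that
\[
h_K\leq\log\disc(K)+r_K+\eps\,h_K+[K:\Q]M(\eps).
\]
Dividing by $[K:\Q]$ and using~\eqref{E:h},~\eqref{E:abs rad} and~\eqref{E:ld} this reads $h\leq\ld+r+\eps h+M(\eps)$, so taking the infimum over $\eps>0$ yields $h\leq\ld+r+\psi(h)$ with $\psi(h)=\inf_{\eps>0}(\eps h+M(\eps))$. Next I would insert the specific bound suggested by the numerical data, namely $M(\eps)\leq 4/\eps$. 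The function $\eps\mapsto\eps h+4/\eps$ is minimized at $\eps=2/\sqrt{h}$, where it takes the value $2\sqrt{h}+2\sqrt{h}=4\sqrt{h}$. This produces $\psi(h)\leq 4\sqrt{h}$, which is precisely the statement of the conjecture.

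The main obstacle, of course, is proving the quantitative bound $M(\eps)\leq 4/\eps$ in the uniform abc conjecture. The abc conjecture itself is open, and even in the cases where explicit bounds have been extracted (for instance from results on $S$-unit equations), the dependence of $M$ on $\eps$ is vastly weaker than $4/\eps$; the value $4$ is an empirical fit to the known champion examples rather than an outcome of any proof technique. Mochizuki's restricted abc framework discussed in Section~\ref{S:M} reduces to Vojta's Height Inequality on the tripod, but does not appear to yield an explicit polynomial rate in $1/\eps$. A secondary, but nontrivial, issue is that~\eqref{E:psi from h} must be applied \emph{uniformly} across all number fields $K$ simultaneously, which is why I have phrased the conditional step through Conjecture~\ref{C:disc} rather than through Conjecture~\ref{C:M} for a single $K$; absorbing the discriminant into the right-hand side is what allows the single $\psi$ to work for all algebraic points.

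In short, the conditional step is a one-line application of AM--GM to $\eps h+4/\eps$; the genuinely hard and unresolved content is the effective constant in the uniform abc inequality. I expect no route to the statement that bypasses this input, since by~\eqref{E:psi from h} any $\psi(h)=C\sqrt{h}$ bound for the error term is logically equivalent to an $M(\eps)\leq (C/2)^2/\eps$ bound in the classical formulation.
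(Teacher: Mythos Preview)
Your assessment is correct: the statement is a conjecture and the paper does not prove it either. The paper's entire treatment is the sentence immediately preceding the conjecture, namely that numerical evidence suggests $M(\eps)\sim4/\eps$ and that by~\eqref{E:psi from h} this yields $\psi(h)=4\sqrt{h}$; your conditional derivation via minimizing $\eps h+4/\eps$ is exactly this observation spelled out, so your proposal matches the paper's approach.
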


This would be close to the strongest possible form of the abc conjecture,
because there are infinitely many abc sums of rational numbers such that
\begin{gather}\label{E:good abc}
h_\Q\geq r_\Q+6\sqrt{h}/\log h
\end{gather}
(see~\cite{goodabc}).
Before Mochizuki's proof of the abc conjecture,
it was only known that~$\psi$ could be~$h-C\log h$ for some constant
(see~\cite{ST,SY}).

\begin{remark}
As was pointed out by Sarnak in a conversation with the author,
an important weakness of Conjecture~\ref{C:M} is that it cannot be falsified.
The function $4\sqrt h$ in Conjecture~\ref{C:sqrt} is a completely explicit bound for $h-\ld-r$,
and by~\eqref{E:good abc},
is close to best possible.
This function makes Conjecture~\ref{C:sqrt} falsifiable,
and we start to see a glimpse of an archimedean contribution to the radical.
It may have a deep geometric significance,
as Remark~\ref{R:Nevanlinna} below explains.
Conjecture~\ref{C:sqrt} may be formulated as
\[
\sqrt{h}\leq2+\sqrt{\ld+r+4},
\]
and the author will award \$1,000 for the first counter example to this inequality,
even with $h_\imath$ replaced by $h^\circ_\imath$ of Remark~\ref{R:FS metric}.
\end{remark}

\begin{remark}\label{R:Nevanlinna}
For number fields,
we have defined the archimedean contribution to the absolute radical as $1$,
independent of the abc sum.
For the field of meromorphic functions,
a naive definition of the radical defines the archimedean contribution as $1$ as well
(if we had defined~$\deg(v_z,\rho)=1$ in~\cite{meromorphic}).
In that paper,
we defined another archimedean contribution~$r_\arch$ to the radical to obtain $h\leq r+r_\arch$ for all sums $a+b=1$ of meromorphic functions,
by a direct and easy application of the definitions.
The usefulness of this theorem (for example,
to prove Fermat's Last Theorem for holomorphic functions) comes from the estimate $r_\arch\leq (2+\eps)\log h$,
which follows from the Lemma of the Logarithmic Derivative of Nevanlinna Theory.
We expect that for number fields,
the term $4\sqrt{h}$ in Conjecture~\ref{C:sqrt} is similarly a bound for a
natural archimedean contribution to the radical,
as yet unknown.
\end{remark}

\section{Ellenberg's theorem for number fields}
\label{S:E}

An arithmetic divisor is a finite sum
$
D=\sum_\imath D_\imath(\imath)+\sum_vD_v(v),
$
where each $D_v\in\Z$ and $D_\imath\in\R$ for each embedding of $K$ in $\C$.
The divisor is {\em positive\/} if all nonzero coefficients are positive.

The condition in Ellenberg's theorem is conveniently formulated using a positive arithmetic divisor,
but we have only been able to include one archimedean condition.

We write $K_v$ for the completion of the number field $K$ at the valuation~$v$,
and $p$ for the rational prime number such that~\mbox{$v(p)>0$}.
Recall from~\eqref{E:e} that valuations are integer valued,~\mbox{$v(p)=e$} and~\mbox{$N\pid_v=p^f$}.

An element $a\in K_v$ is an integer if $v(a)\geq0$,
and then $v(a-1)\geq0$ as well.
The identity
\begin{gather}\label{E:a^m}
a^m-1=(a-1)(a^{m-1}+\dots+1)
\end{gather}
shows that $v(a^m-1)\geq v(a-1)$ if $v(a)\geq0$.
In other words,
power maps are not expansive near $1$.
By the following lemma,
the $p$-th power function is a contraction near $1$.

\begin{lemma}\label{L:nonarch}
Let\/ $a\in K_v$ be such that\/ $v(a-1)\geq1$ and let\/ $p$ be the residue characteristic.
Then
$
v(a^p-1)\geq1+v(a-1).
$
\end{lemma}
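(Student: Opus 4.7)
The plan is to write $a = 1 + \pi$ with $\pi = a - 1$ and $m := v(\pi) \geq 1$, then expand by the binomial theorem
\[
a^p - 1 = (1+\pi)^p - 1 = \sum_{k=1}^{p}\binom{p}{k}\pi^k,
\]
and estimate the valuation of each summand separately. The conclusion will follow from the ultrametric inequality
\[
v(a^p-1) \geq \min_{1 \leq k \leq p}\bigl(v(\tbinom{p}{k}) + km\bigr),
\]
so it suffices to show that every summand has valuation at least $1 + m$.

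Recall from the excerpt that $v$ is integer valued with $v(p) = e \geq 1$, since $p$ is the residue characteristic and $\pid_v$ lies over $p$. For the endpoint $k = 1$, I compute $v(p\pi) = e + m \geq 1 + m$ directly. For the endpoint $k = p$, I observe $v(\pi^p) = pm = m + (p-1)m \geq m + 1$, using $p \geq 2$ and $m \geq 1$. For the intermediate terms $2 \leq k \leq p-1$, the standard fact $p \mid \binom{p}{k}$ gives $v\bigl(\binom{p}{k}\bigr) \geq e \geq 1$, while $km \geq m$ for $k \geq 1$, so $v\bigl(\binom{p}{k}\pi^k\bigr) \geq 1 + m$ as required.

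There is no real obstacle: the argument is just careful bookkeeping in the binomial expansion. The one subtle point, worth stating explicitly, is the use of $v(p) \geq 1$, which is where the hypothesis that $p$ is the residue characteristic enters (had $v$ been normalized differently, or had $p$ been a unit at $v$, the $k = 1$ term would fail). Note also that, in each of the three cases, one gets strict inequality $v \geq 1 + m$ for most terms except possibly a single one, so the ultrametric step does not collapse; but since we only need the lower bound $1 + m$, we do not need to track when the minimum is attained.
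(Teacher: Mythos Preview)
Your proof is correct and is essentially the same argument as the paper's: both expand $a^p-1$ in powers of $a-1$ and use $v(p)\geq1$ together with $v(a-1)\geq1$ to bound every term. The paper phrases this via the factorization $a^p-1=(a-1)\sum_{m=0}^{p-1}a^m$ and the observation that each $a^m=1+O(a-1)$, so the second factor is $p+O(a-1)$; your binomial expansion is just the explicit unwinding of that same computation.
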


\begin{proof}
Identity~\eqref{E:a^m} applied to $m=0,\dots,p-1$ gives that each power of $a$ is close to $1$.
It follows that
\[
a^p-1=(a-1)\sum_{m=0}^{p-1}\bigl(1+O(a-1)\bigr)=(a-1)(p+O(a-1)).
\]
Since $v(p)\geq 1$,
we conclude that $v(a^p-1)\geq 1+v(a-1)$.
\end{proof}

The next two lemmas explain that the power map on complex numbers close to the unit circle is expansive near $1$.
The only attractors are $0$ and~$\infty$ in~$\P^1(\C)$,
and $1$ is a repelling fixed point.

\begin{lemma}\label{L:argument}
Let\/ $\alpha$ be a complex number with\/ $|\alpha|=1$ and\/ $d$ a positive real number.
Then there exists an exponent\/ $1\leq m\leq 8\pi e^d$ such that
\[
|\alpha^m-1|<\frac{1}{4}e^{-d}.
\]
\end{lemma}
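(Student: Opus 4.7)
The plan is to reduce the statement to a classical Diophantine approximation on the circle. Writing $\alpha=e^{i\theta}$ with $\theta\in[0,2\pi)$, the bound $|\alpha^m-1|<\tfrac14 e^{-d}$ will follow from the elementary inequality $|e^{i\phi}-1|=2|\sin(\phi/2)|\leq|\phi|$ applied to $\phi=m\theta-2\pi k$ for a suitable integer $k$. So the real task is to exhibit an $m$ with $1\leq m\leq 8\pi e^d$ such that $m\theta$ lies within distance less than $\tfrac14 e^{-d}$ of some integer multiple of $2\pi$.

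For this, I would use Dirichlet's pigeonhole. Setting $N=\lceil 8\pi e^d\rceil$, partition $\R/2\pi\Z$ into $N$ equal arcs of length $2\pi/N\leq\tfrac14 e^{-d}$ and consider the $N+1$ points $j\theta\pmod{2\pi}$ for $j=0,1,\ldots,N$. By pigeonhole two of these, say $j\theta$ and $k\theta$ with $0\leq j<k\leq N$, fall in the same arc, so $m:=k-j$ satisfies $1\leq m\leq N$ and $\|m\theta\|_{2\pi}<2\pi/N\leq\tfrac14 e^{-d}$, where $\|\cdot\|_{2\pi}$ denotes distance to the nearest multiple of $2\pi$. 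Combined with the inequality of the first paragraph this gives $|\alpha^m-1|<\tfrac14 e^{-d}$, which is what we want.

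There is no substantive obstacle in this argument; the pigeonhole is a single line and the angular estimate is elementary. The only issue worth flagging is cosmetic: the argument as written yields $m\leq\lceil 8\pi e^d\rceil$, which can exceed the stated bound $8\pi e^d$ by up to $1$. This can be absorbed either by noting that $8\pi e^d\geq 8\pi$ for $d\geq 0$ so a unit rounding is negligible in the constant, or by taking $N=\lfloor 8\pi e^d\rfloor$ arcs and using the strict inequality $2|\sin(\phi/2)|<|\phi|$ for $\phi\neq 0$ to recover the strict bound in the conclusion.
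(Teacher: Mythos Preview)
Your approach is essentially the paper's: a pigeonhole on the unit circle combined with the bound $|e^{i\phi}-1|\le|\phi|$ (the paper writes it as $|e^{it}-1|=\bigl|\int_0^{it}e^x\,dx\bigr|\le t$, which is the same thing).

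The only discrepancy is the rounding issue you flagged, and your second proposed fix does not actually close it. With $N=\lfloor 8\pi e^d\rfloor$ arcs the arc length $2\pi/N$ can \emph{exceed} $\tfrac14 e^{-d}$, and the strict inequality $2|\sin(\phi/2)|<|\phi|$ only yields $|\alpha^m-1|<2\pi/N$, which is not enough to conclude $|\alpha^m-1|<\tfrac14 e^{-d}$. The paper sidesteps this by using the \emph{circular} form of the pigeonhole rather than a fixed partition into arcs: take $k=\lfloor 8\pi e^d\rfloor$ and let the $k+1$ points $\alpha^0,\alpha^1,\dots,\alpha^k$ themselves cut the circle into at most $k+1$ arcs. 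The shortest arc then has length at most $2\pi/(k+1)<\tfrac14 e^{-d}$, since $k+1>8\pi e^d$, while the difference of the two exponents at its endpoints satisfies $m\le k\le 8\pi e^d$. This gains exactly the extra ``$+1$'' you were missing.
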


\begin{proof}
Let $k$ be the integer part of $8\pi e^d$.
Order $\alpha^0$,
$\alpha^1$,
\dots,
$\alpha^k$ according to increasing principal argument along the unit circle.
Since there are at least~$8\pi e^d$ points and the circumference is $2\pi$,
there are two points that are at most $e^{-d}/4$ apart.
Their quotient is a power $\alpha^m$ equal to~$e^{ it}$ for some positive real number $t<e^{-d}/4$.
Since $|e^x|=1$ on the line from $0$ to $it$,
the distance to~$1$ can be estimated as
\[
|e^{ it}-1|=\biggl|\int_0^{ it}e^x\,dx\biggr|<\frac{1}{4}e^{-d}.
\]
We conclude that $|\alpha^m-1|<e^{-d}/4$ for some $m\leq8\pi e^d$.
\end{proof}

\begin{lemma}\label{L:arch}
Let\/ $a$ be a complex number and let\/ $d$ be a positive real number such that\/ $\bigl|\log|a|\bigr|<e^{-2d}/32\pi$.
Then there exists an exponent\/ $1\leq m\leq8\pi e^d$ such that\/ $|a^m-1|<e^{-d}$.
\end{lemma}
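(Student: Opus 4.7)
The plan is to reduce to Lemma~\ref{L:argument} by factoring $a=|a|\alpha$ with $\alpha=a/|a|$ on the unit circle. Since $a^m=|a|^m\alpha^m$, the triangle inequality gives
\[
|a^m-1|\leq|a^m-\alpha^m|+|\alpha^m-1|=\bigl||a|^m-1\bigr|+|\alpha^m-1|,
\]
so it suffices to pick $m$ that controls both terms.

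First I would apply Lemma~\ref{L:argument} to $\alpha$ with the same value of $d$, producing an integer $1\leq m\leq 8\pi e^d$ with $|\alpha^m-1|<\tfrac14 e^{-d}$. Then I would use exactly this $m$ to estimate the modulus term. Writing $t=\log|a|$, the hypothesis gives $|t|<e^{-2d}/(32\pi)$, so
\[
|mt|\leq 8\pi e^d\cdot\frac{e^{-2d}}{32\pi}=\frac{e^{-d}}{4}\leq\frac14,
\]
since $d>0$. From the real-variable bound $|e^x-1|\leq|x|e^{|x|}$ applied to $x=mt$, I get
\[
\bigl||a|^m-1\bigr|=|e^{mt}-1|\leq\frac{e^{-d}}{4}\cdot e^{1/4}<\frac34 e^{-d}.
\]
Combining the two estimates yields $|a^m-1|<\tfrac14 e^{-d}+\tfrac34 e^{-d}=e^{-d}$, as desired. (The numerical slack allows a cleaner bound such as $\tfrac14 e^{-d}+\tfrac12 e^{-d}$ if one prefers a sharper estimate on $e^{|mt|}$, but the argument only needs the sum to stay below $e^{-d}$.)

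There is no real obstacle here: the hypothesis on $|\log|a||$ is engineered precisely so that $m|\log|a||\leq e^{-d}/4$ for any $m$ in the range supplied by Lemma~\ref{L:argument}, and the rest is the elementary estimate $|e^x-1|\leq|x|e^{|x|}$. The only thing to watch is that one applies Lemma~\ref{L:argument} with the same $d$, not with a shifted value, so that the two contributions to $|a^m-1|$ are of the same order $e^{-d}$ and their sum stays below the required threshold.
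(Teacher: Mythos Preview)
Your proof is correct and follows essentially the same route as the paper: factor $a=|a|\alpha$, apply Lemma~\ref{L:argument} to $\alpha$ to obtain $m$, use the hypothesis to bound $m\bigl|\log|a|\bigr|\leq e^{-d}/4$, and combine via the triangle inequality. The only cosmetic differences are that the paper uses the decomposition $a^m-1=|a|^m(\alpha^m-1)+(|a|^m-1)$ and the estimate $e^s<1/(1-s)$ rather than $|e^x-1|\leq|x|e^{|x|}$, arriving at the slightly sharper bound $|a^m-1|<\tfrac23 e^{-d}$.
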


\begin{proof}
First consider the argument $\alpha=a/|a|$ of $a$.
By Lemma~\ref{L:argument},
there exists an $m\leq8\pi e^d$ such that $|\alpha^m-1|<e^{-d}/4$.
For this exponent,
$|m\log|a||<e^{-d}/4$,
hence $|a|^m<1/(1-e^{-d}/4)$.
Since
$
a^m-1$ can be written as $|a|^m(\alpha^m-1)+|a|^m-1$,
it follows that
\[
|a^m-1|<\frac{e^{-d}}{4}\left(1+\frac{1+e^{-d}/4}{1-e^{-d}/4}\right)\leq\frac23e^{-d},
\]
and this is bounded by $e^{-d}$.
\end{proof}

To formulate the condition of Ellenberg's theorem,
we use that a number $A\in K$ is $v$-adically close to $\T=\{0,1,\infty\}$ if for some $D_v$,
\begin{gather}\label{E:D-v}
v(A)\geq D_v\ \text{ or }\ v(1/A)\geq D_v\ \text{ or }\ v(1-A)\geq D_v.
\end{gather}
For an embedding $\imath\colon K\rightarrow\C$,
the analogous condition is
\begin{gather}\label{E:D-i}
-\log|\imath A|\geq D_\imath\ \text{ or }\ \log|\imath A|\geq D_\imath\ \text{ or }\ -\log|1-\imath A|\geq D_\imath .
\end{gather}

\begin{remark}
If $A+B=C$ is an abc sum as in the introduction and $D=\prod_pp^{D_v}$ is the integer associated with the arithmetic divisor,
then condition~\eqref{E:D-v} means that $D\mid ABC$.
Condition~\eqref{E:D-i} means that~\mbox{$C\geq e^{D_\imath}$}.
\end{remark}

\begin{theorem}\label{T:E}
Let\/ $K$ be a number field and\/ $\imath\colon K\rightarrow\C$ an embedding.
Let\/ $D$ be a positive arithmetic divisor such that\/ $D_\jmath=0$ for every other embedding in $\C$.
Suppose that an increasing function $\psi_D$ is known such that
for every sum\/ $A+B=1$ in\/ $K$ satisfying\/~\eqref{E:D-v} for all nonarchimedean valuations and\/~\eqref{E:D-i}
for the archimedean valuation\/~$\imath$ of\/~$K,$
\begin{gather}\label{E:D-abc}
h_K\leq\log\disc(K)+r_K+[K:\Q]\psi_D(h).
\end{gather}
Then for every sum $a+b=1$ in\/ $K,$
\[
h_K\leq\log\disc(K)+r_K+[K:\Q]\psi_D(Ch+\log2)+\log\bigl( C2^C\bigr),
\]
where\/ $C$ is given in~\eqref{E:bound n} and~\eqref{E: n_0} below.
\end{theorem}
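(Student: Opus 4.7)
The plan is to lift a general abc sum $a+b=1$ in $K$ to the power sum $a^N+(1-a^N)=1$ for an exponent $N$ chosen so that the lifted point lies close to $\T$ at every valuation prescribed by $D$. For each nonarchimedean $v$ with $D_v>0$, set $n_v = (N\pid_v-1)\,p_v^{D_v-1}$, where $p_v$ is the residue characteristic, and let $n$ be the least common multiple of the $n_v$. If $v(a)\neq 0$ then the bound $n\geq n_v\geq D_v$ makes~\eqref{E:D-v} hold trivially for $a^n$. If $v(a)=0$, then $a^{N\pid_v-1}\equiv 1\pmod{\pid_v}$, so $v(a^{N\pid_v-1}-1)\geq 1$, and iterating Lemma~\ref{L:nonarch} a total of $D_v-1$ times yields $v(a^{n_v}-1)\geq D_v$, hence $v(a^n-1)\geq D_v$.

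For the single archimedean valuation, set $\alpha=\imath a^n$ and split into cases. If $\bigl|\log|\alpha|\bigr|\geq e^{-2D_\imath}/(32\pi)$, an integer $m$ of size at most roughly $32\pi D_\imath\, e^{2D_\imath}$ makes $|\alpha^m|^{\pm 1}\geq e^{D_\imath}$. Otherwise Lemma~\ref{L:arch} produces an $m\leq 8\pi e^{D_\imath}$ with $|\alpha^m-1|<e^{-D_\imath}$. In either case $N=nm$ satisfies both~\eqref{E:D-v} and~\eqref{E:D-i} and has $N\leq C$, with $C$ depending only on $D$ and specified by the explicit bounds~\eqref{E:bound n} and~\eqref{E: n_0}.

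Applying the hypothesis~\eqref{E:D-abc} to the lifted abc sum yields
\[
h_K(a^N:1-a^N:1)\leq\log\disc(K)+r_K(a^N:1-a^N:1)+[K:\Q]\,\psi_D(h(a^N)).
\]
Lemma~\ref{L:n height} bounds the left side from below by $N\,h_K(a:1-a:1)-[K:\Q]\,N\log 2$ and the argument of $\psi_D$ from above by $Nh+\log 2$. The factorization $1-a^N=(1-a)(1+a+\cdots+a^{N-1})$ shows that a valuation contributing to $r_K(a^N:1-a^N:1)$ but not to $r_K(a:1-a:1)$ must satisfy $v(a)=v(1-a)=0$ and $v(1+a+\cdots+a^{N-1})>0$. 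Summing over such valuations and applying Lemma~\ref{L: powers height}, the total new contribution is at most $[K:\Q]\log N+(N-1)\,h_K(a:1-a:1)$. Substituting, subtracting $(N-1)\,h_K(a:1-a:1)$ from each side, and using the monotonicity of $\psi_D$ produces the stated inequality with $C=N$.

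The main obstacle is the construction of the archimedean exponent. Power maps are expansive near $1$ on the unit circle, so there is no single exponent that can simultaneously drive several conjugates of $a$ close to $\T$ without forcing others to escape; this is precisely why Theorem~\ref{T:E} is restricted to a single archimedean condition. The trichotomy between the case when $|\imath a|$ is bounded away from the unit circle (handled by direct iteration) and the case when $|\imath a|$ is close to the unit circle (handled by Lemma~\ref{L:arch} via the pigeonhole argument of Lemma~\ref{L:argument}) is what allows $m$, and hence $C$, to be bounded purely in terms of $D_\imath$ rather than in terms of the point $a$.
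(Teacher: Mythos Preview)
Your proposal is correct and follows essentially the same route as the paper: raise $a$ to a power $N$ chosen to force \eqref{E:D-v} and \eqref{E:D-i}, apply the hypothesis to $A=a^N$, and compare heights and radicals via Lemmas~\ref{L:n height} and~\ref{L: powers height} together with the factorization $1-a^N=(1-a)S$. The only cosmetic differences are that the paper takes $n_0$ as the \emph{product} of the $(N\pid_v-1)p_v^{D_v-1}$ rather than their lcm, and bounds the extra radical contribution by $(N-1)h_K(a:1)$ rather than $(N-1)h_K(a:1-a:1)$; both are harmless, since your lcm divides the paper's product (so $N\le C$ still holds) and $h_K(a:1)\le h_K(a:1-a:1)$.
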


\begin{proof}
For $a\in K$
we will find an exponent such that  the sum $A+B=1$ with $A=a^n$ satisfies the conditions~\eqref{E:D-v} and~\eqref{E:D-i}.
Which exponent needs to be chosen depends on the closeness of $|\imath a|$ to $1$ and it is the reason that no second archimedean condition can be imposed with our approach.
The number of possible exponents,
and their size,
is bounded in terms of $D$.

If $v(a)=0$ for a valuation with $D_v\geq1$ then $v(a^{N\pid_v-1}-1)>0$ (since the group $(\O_K/\pid_v)^*$ has order $N\pid_v-1$).
By Lemma~\ref{L:nonarch},
if~$n$ is a multiple of $(N\pid_v-1)p_v^{D_v-1}$ then $v(1-A)\geq D_v$.
And if~$v(a)\ne0$ then~$|v(A)|\geq D_v$ for~$n\geq D_v$.
Since $p_v^{D_v-1}\geq2^{D_v-1}\geq D_v$,
both conditions are satisfied for every nonarchimedean valuation if $n$ is a multiple of
\begin{gather}\label{E: n_0}
n_0=\prod_{v\colon D_v\geq1}\bigl(p_v^{f_v}-1\bigr)p_v^{D_v-1}.
\end{gather}
If $D_\imath=0$ then we use this exponent for $n$.

If $D_\imath>0$,
then $n$ will be a multiple of $n_0$ as follows.
If
\[
\bigl|n_0\log|\imath a|\bigr|\leq\frac1{32\pi}e^{-2D_\imath},
\]
then we can find a power~\mbox{$m\leq 8\pi e^{D_\imath}$}
such that $|1-\imath A|\leq e^{-D_\imath}$ for the exponent~\mbox{$n=n_0m$},
by Lemma~\ref{L:arch}.
And if $\bigl|n_0\log|\imath a|\bigr|>\frac1{32\pi}e^{-2D_\imath}$,
then for the least integer $m$ larger than~$D_\imath\cdot 32\pi e^{2D_\imath}$,
the exponent $n=n_0m$ satisfies $\bigl|\log|\imath A|\bigr|\geq D_\imath$.
The exponent $n$ thus obtained is bounded by
\begin{gather}\label{E:bound n}
C=\bigl(32\pi D_\imath e^{2D_\imath}+1\bigr)n_0.
\end{gather}

In summary,
if $D_\imath=0$ for all archimedean valuations,
then we use~$A=a^n$ with $n=n_0$.
If $D_\imath>0$ and $\imath a$ lies close to the unit circle,
then we use $n=mn_0$ for some $m\leq8\pi e^{D_\imath}$.
And if $\imath a$ does not lie very close to the unit circle,
we use an exponent close to~\eqref{E:bound n}.
This gives a finite number of possible exponents to consider.
We  conclude that for one sum $A+B=1$ thus obtained,
\eqref{E:D-abc} holds,
\begin{gather}\label{E:abc hypothesis}
H_K\leq \log\disc(K)+R_K+[K\colon\Q]\psi_D(H),
\end{gather}
where $H_K=h_K(A:B:1)$,
$H$ is the corresponding absolute height,
and~$R_K$ is the radical of $(A:B:1)$.
We write $h$
and $r$ for the corresponding quantities for $(a:1-a:1)$,
using the absolute height and radical for convenience.

By Lemma~\ref{L:n height} and~\eqref{E:abc hypothesis},
the absolute height is estimated by
\[
nh\leq H+n\log2\leq\ld(a:1-a:1)+R+\psi_D(H)+n\log2.
\]

To estimate the radical,
if $v(A)\ne0$ then $v(a)\ne0$ and $R_v=r_v$.
And if $v(A-1)>0$ then $A-1=(a-1)S$,
where $S=a^{n-1}+\dots+1$,
 and either $v(a-1)>0$,
which means that $v$ contributes to the radical $r$,
or~$v(S)>0$.
We find that
\[
R\leq r+\sum_{v(S)>0}\log N\pid_v\leq r+\sum_v(\log N\pid_v)\max\{v(S),0\}.
\]
Adding archimedean contributions $\sum_\imath \max\{-\log|\imath S|,0\}$ to the right-hand side,
we obtain by Lemma~\ref{L: powers height},
\[
R\leq r+h(1:1/S)\leq r+(n-1)h(a:1)+\log n.
\]

Clearly,
$h(a:1)\leq h$.
Further,
since $H\leq nh+\log2$ by Lemma~\ref{L:n height} and~$\psi_D$ is increasing,
we find
\[
h\leq \ld+r+\psi_D(nh+\log 2)+\log \bigl(n2^n\bigr).
\]
The theorem follows since $n\leq C$ and $\psi_D$ is increasing.
\end{proof}

\section{Belyi maps}
\label{S:Belyi}

The following is Mochizuki's extension~\cite{Belyi} of Belyi's theorem,
that every curve defined over a number field can be mapped to $\P^1$ by a mapping that is only ramified above the tripod $\T$,
and does not map any chosen finite subset of rational points of the curve to $\T$.
Equivalently,
the map is regular at the points of any chosen finite subset.

The first step reduces this to the curve $\P^1$ and one point where the map $\P^1\rightarrow\P^1$ is required to be regular.

Let $C$ be a curve of genus $g$ defined over a number field $K$ and let $E$ and $R$ be finite subsets defined over $K$ such that none of the conjugates of points in~$R$ lies in $E$.
Enlarge $R$ if necessary so that the associated positive divisor (of points of multiplicity one) has degree at least $2g$.
By Riemann-Roch,
for each point $r$ in $R$,
$l(R)=l(R-(r))+1$.
It follows that there exists a function with a simple pole at $r$,
at most a simple pole at each other point of~$R$,
and no other poles.
By considering the divisor of all conjugates of $r$,
we can even find such a function that is defined over $K$.

\begin{lemma}
Let\/ $C$ be a curve over a number field\/ $K$
with a finite set of points\/ $R=\{r_1,r_2,\dots,r_n\}$.
For each\/ $r_k,$
let\/ $f_k$ be
a function defined over\/~$K$
with a simple pole at\/ $r_k,$
at most a simple pole at the other points,
and no other poles.
Then there exists a function
defined over\/~$K$ and
with pole divisor\/ $R=(r_1)+\dots+(r_n).$
\end{lemma}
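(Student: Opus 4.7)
The plan is to construct $f$ as a $K$-linear combination $f=c_1f_1+\dots+c_nf_n$ with $c_j\in K$. Since each $f_j$ is defined over~$K$ and has pole divisor supported in~$R$ with every pole simple, the same properties pass to~$f$, so the pole divisor of $f$ is automatically dominated by~$R$. The task reduces to choosing $(c_1,\dots,c_n)\in K^n$ so that the pole is actually present at every~$r_k$.

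For each pair $(j,k)$, fix a local parameter $t_k$ at~$r_k$ and let $a_{jk}$ denote the coefficient of $t_k^{-1}$ in the Laurent expansion of $f_j$ at~$r_k$. By hypothesis $a_{kk}\neq0$, while $a_{jk}$ may vanish for $j\neq k$ (when $f_j$ has no pole at~$r_k$). The pole of $f$ at~$r_k$ is present precisely when $\sum_j c_j a_{jk}\neq0$. For each~$k$, the set
\[
H_k=\{(c_1,\dots,c_n)\in K^n\colon\textstyle\sum_j c_j a_{jk}=0\}
\]
is the $K$-kernel of a $K$-linear map $K^n\to\bar K$, hence a $K$-subspace of~$K^n$; it is proper, because $a_{kk}\neq0$ says that $H_k$ does not contain the $k$-th standard basis vector.

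Since $K$ is a number field it is infinite, so $K^n$ is not the union of the finitely many proper $K$-subspaces $H_1,\dots,H_n$. Any choice of $(c_1,\dots,c_n)\in K^n\setminus(H_1\cup\dots\cup H_n)$ yields a function $f\in K(C)$ with a simple pole at each~$r_k$ and no other poles, so that the pole divisor of $f$ equals $(r_1)+\dots+(r_n)=R$. No serious obstacle arises: the only mild point to watch is that the residues $a_{jk}$ a priori live in an extension of~$K$ (the field of definition of~$r_k$), but this does not spoil the argument, since $H_k$ remains a proper $K$-subspace as soon as any one $a_{jk}$ is nonzero in~$\bar K$, which is ensured by $a_{kk}\neq0$.
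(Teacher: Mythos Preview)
Your argument is correct. Both you and the paper take a linear combination $f=\sum_j c_j f_j$ and choose the coefficients so that no residue vanishes; the difference is in how the coefficients are found. You use a pure linear-algebra existence argument: each condition ``pole at $r_k$ is lost'' cuts out a proper $K$-hyperplane $H_k\subset K^n$, and since $K$ is infinite, $K^n$ is not the finite union $H_1\cup\dots\cup H_n$. The paper instead gives an explicit choice: pick a prime $p$ at which every nonzero residue $a_{jk}$ is a $p$-adic unit, and set $c_j=p^j$; then the nonzero terms contributing to the residue at $r_k$ have pairwise distinct $p$-adic valuations, so their sum cannot vanish. Your route is slightly more elementary (no arithmetic input at all) and works verbatim over any infinite field; the paper's route is constructive, yielding concrete coefficients $p,p^2,\dots,p^n$, and foreshadows the $p$-adic separation trick used later in Section~\ref{S:Belyi}.
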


\begin{proof}
The residues of each $f_k$ at each of its poles form a finite set of algebraic numbers,
and we can find a prime number~$p$ such that each is a $p$-adic unit (in one embedding in $\C_p$).
Consider the function
\[
f=\sum_{k=1}^np^kf_{k}.
\]
Clearly,
$f$ has only simple poles,
and only at the points of $R$.
The residue of $f$ at any of these points is a sum of at least one nonzero term none of whose $p$-adic valuations coincide.
Hence the residue is nonzero.
\end{proof}

By the lemma,
we find a function
\[
f\colon C\longrightarrow\P^1
\]
which maps each point of $R$ to $\infty$ (which we now label $r$),
and which is unramified at each of these points.
Moreover,
the image $f(E)$ is a finite set of algebraic numbers defined over $K$.
We add to this set the branch locus~$\{f(x)\colon f\text{ is ramified at }x\}$.
We thus obtain a set,
again denoted $E$,
of algebraic numbers
such that~$r=\infty\not\in E$
and if $f$ is ramified at $x\in C$ then $f(x)\in E$.

Next we find a rational function such that $E$ is mapped to $\T$,
the function is only ramified above $\T$,
and $r$ is not mapped to $\T$.

Let $p$ be a prime number such that each element of $E$ is a $p$-adic integer and the degree as an algebraic number of each element of $E$ is less than $p$.
For example,
if $E$ is empty,
then we take $p=2$.
The function
\begin{gather}\label{E:to 1/p}
z\longmapsto \frac{z}{1+pz}
\end{gather}
maps $\infty$ to $1/p$ and every element of $E$ to an algebraic number of degree less than $p$ which is also a $p$-adic integer.
If $E$ contains only rational points then we continue with the folding map below.
Otherwise,
consider one algebraic element $\alpha$ of $E$ of maximal degree $d$ and with monic minimum polynomial $m$.
Since $v_p(\alpha)\geq0$,
the coefficients of $m$ are rational numbers without a factor $p$ in the denominator.
The function
\begin{gather}\label{E:m}
z\longmapsto m(z)
\end{gather}
maps each element of $E$ to an algebraic number of at most the same degree,
and it maps $\alpha$ to $0$.
It is ramified at $\infty$,
which is mapped to~$\infty$.
By the following lemma,
applied with $M=0$ and $v_p(d)=0$ (by the choice of $p$),
each finite number $x$ where $m$ is ramified,
i.e.,
a root of the derivative $m'$,
is a $p$-adic integer.

\begin{lemma}\label{L:nona}
Let\/
$
m(X)=X^d+m_1X^{d-1}+\dots+m_{d-1}X+m_d
$
be a polynomial of degree\/ $d$ such that its roots satisfy\/ $v(\alpha_i)\geq M$ for a nonarchimedean valuation\/ $v$.
Then\/ $v(x)\geq M-v(d)$ for every root of\/~$m'(X)=dX^{d-1}+(d-1)m_1X^{d-2}+\dots+m_{d-1}$.
\end{lemma}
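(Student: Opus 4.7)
The plan is to relate the coefficients of $m$ to the valuations of the roots via the elementary symmetric polynomials, and then transfer that information to a root of $m'$ using the equation $m'(x)=0$ together with the ultrametric inequality.

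First, I would expand the coefficients of $m$ as elementary symmetric polynomials of the roots: $m_k = (-1)^k e_k(\alpha_1,\ldots,\alpha_d)$. Each monomial in $e_k$ is a product of $k$ of the $\alpha_i$, and therefore has valuation at least $kM$. The ultrametric inequality then gives $v(m_k) \geq kM$ for every $k = 1,\ldots,d$.

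Next, for a root $x$ of $m'$, I would isolate the leading term and write
\[
d\, x^{d-1} = -\sum_{k=1}^{d-1} (d-k)\, m_k\, x^{d-1-k}.
\]
If $x = 0$ the conclusion is immediate, so assume $x \neq 0$. Applying $v$ to both sides and bounding the right-hand sum below by the minimum valuation of its terms, there exists an index $k \in \{1,\ldots,d-1\}$ with
\[
v(d) + (d-1)\, v(x) \geq v\bigl((d-k)\, m_k\, x^{d-1-k}\bigr) \geq v(m_k) + (d-1-k)\, v(x),
\]
using $v(d-k) \geq 0$. Cancelling $(d-1-k)\, v(x)$ from both sides and inserting $v(m_k) \geq kM$ gives $v(d) + k\, v(x) \geq kM$, and hence $v(x) \geq M - v(d)/k \geq M - v(d)$, where the last inequality uses $k \geq 1$ and $v(d) \geq 0$.

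I do not expect any serious obstacle: the argument is essentially a routine application of the ultrametric inequality together with the expansion of $m$ as a product over its roots. The only small points to track are that the relevant sum in $m'$ runs over $k = 1,\ldots,d-1$ (the leading term is isolated on the left and the constant term $m_d$ of $m$ does not appear in $m'$), and that the resulting bound on $v(x)$ is sharpest at $k=1$, which is precisely the bound claimed in the lemma.
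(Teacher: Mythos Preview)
Your proof is correct and follows essentially the same route as the paper: bound $v(m_k)\geq kM$ via elementary symmetric functions, isolate the leading term of $m'(x)=0$, and apply the ultrametric inequality. The only cosmetic difference is that you divide the resulting inequality $v(d)+k\,v(x)\geq kM$ by $k$ and use $k\geq1$, whereas the paper instead does a brief case split on whether $v(x)\leq M$ (forcing the minimum to occur at $k=1$) or $v(x)>M$ (in which case the conclusion is immediate).
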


\begin{proof}
The root satisfies~\mbox{$-dx^{d-1}=(d-1)m_1x^{d-2}+\dots+m_{d-1}$}.
Since $m_k$ is a sum of products of $k$ roots,~$v(m_k)\geq kM$,
 and we find
\[
v(d)+(d-1)v(x)\geq\min_{1\leq k\leq d-1}kM+(d-1-k)v(x).
\]
If~$v(x)\leq M$,
then the minimum is attained at $k=1$ and the resulting inequality is equivalent to $v(x)\geq M-v(d)$.
And if on the other hand~$v(x)> M$,
then~$v(x)\geq M-v(d)$ as well,
since $v(d)\geq0$.
\end{proof}

The polynomial $m$ of~\eqref{E:m} maps any number with $v_p(x)<0$ to such a number again.
Since roots of $m'$ are algebraic numbers of degree at most~$d-1$,
replacing $E$ by $m(E\cup\infty)\cup\{m(x)\colon m'(x)=0\}$,
after finitely many steps,
there are no more numbers in $E$ of degree $d$.
Repeating this with the numbers of degree $d-1$ in $E$,
 after finitely many steps,
each element of $E$ is a rational number whose denominator is not divisible by~$p$.
The set $R$ where the composition is required to be regular is mapped to a rational number~$r$ whose denominator is divisible by $p$ and is thus not mapped to an element of $E$.
In other words,
the composition so far is a map~$C\rightarrow\P^1$ that is regular at each point of $R$.

Finally,
we reduce the number of rational points in $E$ by successively applying a folding map.
If $\infty$ is not an element of $E$ (i.e.,
if after step~\eqref{E:to 1/p},
$E$ only contains rational points),
then we adjoin it to $E$.

If $E$ contains at least three rational numbers $e_0<e<e_1$,
then we map~$e_0$ and $e_1$ to $0$ and $1$ by
\[
z\longmapsto\frac{z-e_0}{e_1-e_0}.
\]
This maps $e$ to a rational number between $0$ and $1$,
which we write as~$\frac{m}{m+n}$ for positive integers $m$ and $n$.
It maps $r$ and any~$e\in E$ to rational numbers such that $v_p(r)<v_p(e)$.
Then we fold $1$ onto $0$ by
\[
z\longmapsto z^m(1-z)^n.
\]
This polynomial has a local maximum at a point between $0$ and $1$,
which is easily computed to be $\frac{m}{m+n}$,
and it has no other branch points besides this point and $0$,
$1$,
and $\infty$.
Thus $E$ is mapped to a set containing one less rational number.
Also,
$r$ is mapped to a rational number with a $p$-adic valuation less than any point of $E$.
After finitely many applications of this folding map,
$E$ will only contain $0$,
$1$ and $\infty$.

The map $C\rightarrow\P^1$ is now obtained as the composition of the maps in the above steps.
Every point in $E$ is mapped to $\T$,
and $R$ is not mapped to $\T$.

\subsection{Finite sets of noncritical points}
\label{S:uniform Belyi}

There is a sequence of Belyi maps $f_0,f_1,\dots$ such that for any finite set $R$ disjoint from $E$,
one of the functions $f_k$ is not ramified at the points of $R$.
The construction so that $f_0,\dots,f_n$ suffices for sets of $n$ elements (i.e.,
$k\leq \#R$) is taken from~\cite{SZ}.

Let $f_0\colon C\rightarrow \P^1$ be any map such that $f_0(E)\subseteq\T$ and it is only ramified at points above $\T$.
Clearly,
$f_0$ solves the problem for~$n=0$ and the empty set $R_0$.
Suppose that for $n\geq0$,
$f_n$ is a Belyi map for $E$ and a set $R_n$ such that $f_n(r)\not\in\T$ for every point in $R_n$.
In particular,~$E\subseteq f_n^{-1}(\T)$.
Let then $f_{n+1}$ be a Belyi map for~$E$ and the set
\[
R_{n+1}=R_{n}\cup (f_{n}^{-1}(\T)-E).
\]

Since~$R_n\subseteq R_{n+1}$,
the sets $R_{n+1}-R_n$ are disjoint.
It follows that for a set $R$ of $n$ points not in $E$,
$R_{k+1}-R_k$ must be disjoint from $R$ for some~$k\leq n$.
For that value of $k$,
if $f_k(r)\in\T$ for an element of~$R$ then~$r\in R_{k+1}$ but not in~$R_k$,
but that contradicts the choice of $k$.
Hence $f_k$ is regular at each point of $R$.

\section{The Chevalley--Weil theorem}

Mochizuki proves Lemma~\ref{L:C P} and~\ref{L:C P e} below in~\cite[Proposition~1.7(i)]{M}.
They are reminiscent of the Chevalley--Weil theorem that the discriminant of the field of definition of a point above a rational point is bounded.
We briefly explain these lemmas here,
but we refer to Mochizuki's paper for the complete proofs,
especially for the estimates at wildly ramified primes.

\begin{remark}
Wild ramification is an arithmetic phenomenon in mixed characteristic,
and does not happen in function fields of characteristic zero.
The closest function field analogue is an inseparable extension of a function field over a finite field.
\end{remark}

In the following two lemmas,
$\phi\colon C\rightarrow\P^1$ is a Belyi map and $D=\phi^*\T$.
The first lemma is the first inequality in~\cite[Proposition 1.7(i)]{M}.

\begin{lemma}\label{L:C P}
There exists an\/ $M$ such that for every number field\/ $K$ and all points\/ $x\in C(K),$
\begin{gather*}\label{E:C P}
\ld(\phi x)+r_\T(\phi x)\leq\ld(x)+ r_D(x)+M.
\end{gather*}
\end{lemma}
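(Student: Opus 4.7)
The plan is to reduce the inequality to a place-by-place comparison via the conductor--discriminant formula, exploiting the fact that $\phi$ is ramified only over $\T$. Let $F_y = \Q(\phi x) \subseteq F_x = \Q(x)$ be the minimal fields of definition. The tower formula for discriminants gives
\[
\ld(x) - \ld(\phi x) = \frac{1}{[F_x:\Q]}\sum_w (\log N\pid_w) \sum_{v\mid w} f_{v|w}\, v_v(\ld_{F_x/F_y}),
\]
where $w$ ranges over finite places of $F_y$ and $v$ over places of $F_x$ above $w$, with $e_{v|w}, f_{v|w}$ the usual local invariants. Similarly, $r_\T(\phi x)$ and $r_D(x)$ are local sums, the former over places $w$ of $F_y$ at which $\phi(x)$ is $w$-adically close to $\T$, the latter over places $v$ of $F_x$ at which $x$ is $v$-adically close to $D$.

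Fix a non-archimedean place $w$ of $F_y$ of residue characteristic outside a finite set $S$ of ``bad'' primes depending only on $\phi$. If $\phi(x)$ is close to some $q \in \T$ at $w$, then the $w$-adic preimage of a small neighborhood of $q$ splits as a disjoint union of neighborhoods of the points of $\phi^{-1}(q) \subseteq D$, so $x$ is close to some $p \in D$ at each place $v \mid w$. Using $[F_x:F_y] = \sum_{v\mid w} e_{v|w} f_{v|w}$ and $\log N\pid_v = f_{v|w} \log N\pid_w$, the absolute contribution of $w$ to $r_\T(\phi x) - r_D(x)$ simplifies to
\[
\frac{\log N\pid_w}{[F_x:\Q]}\sum_{v\mid w}(e_{v|w}-1)f_{v|w}.
\]
The classical lower bound $v_v(\ld_{F_x/F_y}) \geq e_{v|w} - 1$ (with equality in the tame case) for every extension of non-archimedean local fields shows this is bounded by the contribution of $w$ to $\ld(x) - \ld(\phi x)$. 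Places $w$ where $\phi(x)$ is not close to $\T$ contribute zero to $r_\T$ and to $r_D$ but non-negatively to $\ld(x) - \ld(\phi x)$, so summing over all $w \notin S$ together with the archimedean places yields
\[
r_\T(\phi x) - r_D(x) \leq \ld(x) - \ld(\phi x) + M
\]
for a constant $M$ depending only on $\phi$.

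The constant $M$ absorbs two bounded corrections. First, the archimedean places contribute at most $\deg D' \leq 3\deg\phi$ per embedding to $r_D$ against $1$ per embedding to $r_\T$, a uniformly bounded difference. Second, each of the finitely many ``bad'' primes in $S$ contributes at most a constant depending on the chosen model of $\phi$, uniformly in $K$ and $x$. The main obstacle is the wild-ramification analysis inside $S$: when the residue characteristic of a place divides a geometric ramification index $e_p$ of $\phi$ at $p \in D$, the local decomposition of preimage neighborhoods requires a more delicate description (the normal form $\phi(x)-q = u(x-p)^{e_p}$ no longer holds integrally), and one must verify that the attribution of the wild part of $\ld_{F_x/F_y}$ to the places above $\T$ remains bounded uniformly in $x$. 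Mochizuki carries out this local analysis for Belyi covers in \cite[Proposition~1.7(i)]{M}, which supplies the remaining ingredient.
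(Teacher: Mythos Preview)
The paper does not supply its own proof of this lemma; it identifies the statement as the first inequality of \cite[Proposition~1.7(i)]{M} and refers there for the details.  Your place-by-place reduction via the tower formula for the discriminant together with the lower bound $v(\ld_{F_x/F_y})\geq e_{v|w}-1$ is the standard argument, and your local computation of the contribution of a place~$w$ to $r_\T(\phi x)-r_D(x)$ is correct.

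One comment on where the difficulty actually lies.  For \emph{this} direction of the inequality you do not need a separate wild-ramification analysis: the bound $v(\ld)\geq e-1$ holds at every non-archimedean place, tame or wild, and that is all you invoke.  Moreover, if one takes the pullbacks $\phi$, $1-\phi$, $1/\phi$ as local equations for $D=\phi^*\T$ (which the paper explicitly permits, up to a bounded change in~$r_D$), then $x$ is $v$-close to~$D$ if and only if $\phi(x)$ is $w$-close to~$\T$ at \emph{every} finite place, so your set~$S$ of bad primes can in fact be taken empty and the constant~$M$ absorbs only the archimedean terms and the ambiguity in the choice of local equations.  The delicate wild-ramification estimate---an \emph{upper} bound on the different---is what is genuinely needed for the companion Lemma~\ref{L:C P e}, and the paper's brief discussion of wild primes is placed there, not here.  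Your closing appeal to~\cite{M} for ``the remaining ingredient'' is therefore unnecessary for the present lemma, though harmless.
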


Note that $\phi x$ may be defined over a subfield of $K$.
Mochizuki takes $K=\Q^\alg$,
 making it even clearer that $M$ does not depend on the field of definition of $x$.\medskip

The next lemma is slightly weaker than the second inequality of~\cite[Proposition 1.7(i)]{M}.
In Section~\ref{S:M},
we consider Belyi maps with ramification index $e$ at each point of $\T$,
and we refer to~\cite{M} for the stronger inequality involving the least common multiple of the ramification indices above $\T$.

\begin{lemma}\label{L:C P e}
There exists an\/ $M$ such that for every number field\/ $K$ and all points\/ $x\in C(K),$
\begin{gather*}\label{E:C P e}
\ld(x)\leq\ld(\phi x)+r_\T(\phi x)+M.
\end{gather*}
\end{lemma}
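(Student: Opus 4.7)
The plan is to apply the tower formula for the discriminant to the inclusion $F' \subseteq F$, where $F = \Q(x)$ and $F' = \Q(\phi x)$; after enlarging $F'$ by the finite field of definition of $\phi$, which only contributes a bounded term to $M$, one has $F' \subseteq F$. The tower formula
\[
\log\disc(F) = \log N_{F'/\Q}(\disc(F/F')) + [F:F']\log\disc(F'),
\]
divided by $[F:\Q]$, rewrites the conclusion of the lemma, via the definition \eqref{E:ld} of $\ld$, as the single bound
\[
\frac{1}{[F:\Q]}\log N_{F'/\Q}(\disc(F/F')) \leq r_\T(\phi x) + M.
\]

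Next I would localize and identify the primes of $F'$ that contribute to $\disc(F/F')$. Since $\phi$ is a Belyi map, it is \'etale away from $\T$: for any prime $\pid$ of $F'$ outside a finite, $\phi$-dependent bad reduction set $S$, if $\phi(x)$ does not reduce into $\T$ modulo $\pid$, then $\phi$ is \'etale on a Zariski neighborhood of $\phi(x)$ over $\O_{F',\pid}$, and the local extension $F_\mathfrak{P}/F'_\pid$ at each $\mathfrak{P}\mid\pid$ is unramified. Consequently the primes of $F'$ ramified in $F$ either lie in $S$ or contribute to $r_\T(\phi x)$.

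For the tame part the estimate is direct. At a tamely ramified prime $\mathfrak{P}$ of $F$ with residue degree $f_\mathfrak{P}$ and ramification index $e_\mathfrak{P}$, $v_\mathfrak{P}(\ld_{F/F'}) = e_\mathfrak{P}-1$, and summing over $\mathfrak{P}\mid\pid$ with $\sum_{\mathfrak{P}\mid\pid} f_\mathfrak{P}e_\mathfrak{P} = [F:F']$ bounds the local contribution $\sum_{\mathfrak{P}\mid\pid} f_\mathfrak{P}\,v_\mathfrak{P}(\ld_{F/F'})\log N\pid$ by $[F:F']\log N\pid$. Summing over the primes $\pid$ of $F'$ that contribute to $r_\T(\phi x)$ yields a total tame contribution of at most $[F:F'][F':\Q]\,r_\T(\phi x) = [F:\Q]\,r_\T(\phi x)$, the desired main term.

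The principal obstacle is wild ramification, and this is exactly what forces the additive constant $M$. Wild ramification of $F/F'$ at $\mathfrak{P}$ forces the residue characteristic of $\mathfrak{P}$ to divide $e_\mathfrak{P}$, hence to divide $n = \deg\phi$; thus wild ramification is confined to primes of $F'$ above the finite set of rational primes dividing $n$. At each such prime, standard local higher ramification theory controls $v_\mathfrak{P}(\ld_{F/F'})$ by a function of $e_\mathfrak{P}$ and the residue characteristic, and hence by a constant depending only on $n$. Aggregating as in the tame step, the total wild contribution to $\log N_{F'/\Q}(\disc(F/F'))$ is bounded by $[F:\Q]$ times a constant depending only on $n$, and the fixed bad reduction set $S$ contributes similarly. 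Dividing by $[F:\Q]$ yields the constant $M$ depending only on $\phi$. Making these wild local estimates uniform and sharp is the technical core of \cite[Proposition~1.7(i)]{M}, and is where I would invoke that result in full.
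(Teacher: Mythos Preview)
Your proposal is correct and follows essentially the same route as the paper's sketch: both reduce to local contributions via the tower formula, observe that unramified primes contribute nothing, bound the tame contribution by the radical $r_\T(\phi x)$, and absorb the wild and bad-reduction contributions into the constant $M$ by appealing to~\cite[Proposition~1.7(i)]{M}. One minor slip: wild ramification at $\mathfrak{P}$ gives that the residue characteristic $p$ divides $e_\mathfrak{P}$, and $e_\mathfrak{P}\leq[F:F']\leq n$, so $p\leq n$; you cannot conclude that $p$ \emph{divides} $n$, but the weaker statement that only finitely many rational primes are involved is all your argument needs.
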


Just as in the previous lemma,
the mention of $K$ is inessential:
this is again a general statement about the algebraic points of $C$.
We refer to \cite[page~10]{M} for the proof.
The idea is that both sides are sums of local contributions
that are equal for unramified primes.
For tamely ramified primes,
both sides are equal for the primes that contribute to the radical,
and otherwise,
it is an inequality.
For wildly ramified primes,
we can assume that $K$ contains a $p$-th root of unity,
and restrict to the wild ramification group,
which is a composition of cyclic extensions of degree $p$.
For that part of the extension,
the exponent of the different can be bounded by $p$.

\section{Mochizuki's theorem}
\label{S:M}

Mochizuki notes that~\cite[Proposition 1.7(ii)]{M}
is an easy consequence of Hurwitz' formula.
We explain the essential contents here,
which is a computation of the degree of a map and of the reduced divisor~\eqref{E:D'} below.

By~\cite[formula~(4.2)]{abcvhi} (see also~\cite{vF}),
if $\phi$ is a Belyi function then
\begin{gather}\label{E:omega}
\omega=\phi^*(\infty)-\phi^{-1}\T,
\end{gather}
is a canonical divisor.
In particular,
$2g-2=\deg\phi-\#\phi^{-1}\T$.

Let $\phi\colon C\rightarrow\P^1$ be a Belyi function with ramification index\/ $e$ at each point above\/ $\T$.
Then the degree of $\phi$ is $ne$,
where $n$ is the number of points above each point of $\T$.
The genus of $C$ is $2g-2=n(e-3)$ and the pull-back of $\T$ is the divisor $D=e(\phi^{-1}\T)$ of degree $3ne$.
Its reduction is
\begin{gather}\label{E:D'}
D'=\phi^{-1}\T,
\end{gather}
a set of $3n$ elements.

An example is the Belyi function for the Fermat curve~\eqref{E:Fermat},
\begin{gather}\label{E:phi Fermat}
\phi\colon(x,y)\longmapsto x^n.
\end{gather}
The ramification of this map is of order $e=n$ at each point above $\T$,
and the degree is~$n^2$.
The genus of the Fermat curve is
$
2g-2=n(n-3).
$
In Lemma~\ref{L:D=0} below,
we need another Belyi map for $\Fermat$ with extra properties.\medskip

Given a finite set $V$ of valuations of $\Q$,
we consider compact neighborhoods of $\T$ in $\P^1(\C_v)$ for each $v\in V$.
We allow any kind of neighborhood such that the complement in $\P^1(K_v)$
for any number field $K$
 is a nonempty open set.
For nonarchimedean valuations,
such a neighborhood is
\begin{gather}\label{E:G_v}
G_v=\{x\in \C_v\colon v(x)\geq g_v\text{ or }v(x)\leq-g_v\text{ or }v(1-x)\geq g_v\}.
\end{gather}
If $g_v>1$,
 the complement of $G_v$ is a nonempty open subset of~$\P^1(K_v)$.
If the residue class field of $K_v$ has more than two elements (in particular,
for every $v\ne v_2$),
 we can even take~\mbox{$g_v>0$}.
For archimedean valuations,
we can use similar neighborhoods,
\[
G_\imath=\bigl\{x\in\C\colon |\imath x|\leq e^{-g_\imath}\text{ or }|\imath x|\geq e^{g_\imath}\text{ or }|1-\imath x|\leq e^{-g_\imath}\bigr\},
\]
for $g_\imath>0$,
as is easily verified by a drawing on the Riemann sphere or on the real line (if $K_v=\R$ and $g_\imath$ is a small positive number,
then $G_\imath$ leaves a small interval around $-1$ in the complement).

\begin{remark}\label{R:finite degree}
Every neighborhood of $\T$ in $\P^1(\R)$,
 $\P^1(\C)$ or $\P^1(K_v)$ contains a compact neighborhood as above.
Also,
$\P^1(\C)$ is locally compact,
but this is not the case for $\P^1(K)$ if $K$ is an infinite extension of $\Q_v$,
both if the residue class field is infinite or if the value group is not discrete.
But certain infinite extensions,
such as~$\Q(\sqrt2,\sqrt3,\sqrt5,\dots)$,
have a locally compact completion in every valuation.

Indeed,
for any valuation,
extensions of bounded degree of $\Q$ are contained in a finite extension of $\Q_v$,
hence in a locally compact field.
This fact follows from the Galois cohomology of local fields and is easy for quadratic fields:
every quadratic extension of $\Q$ is contained in the field~$\Q_2(i,\sqrt{2},\sqrt{3})$,
 and for $p\ne2$,
in $\Q_p(\sqrt{p},\sqrt{a})$,
where $a\in\Z$ is such that $x^2=a$ has no solutions modulo $p$.
\end{remark}

Suppose that for every number field\/ $K$ and for every system of neighborhoods~$G_v$ of~$\T$ as above a function $\psi_G(h)=o(h)$ is known such that
\begin{gather}\label{E:abc general position}
h(P)\leq\ld(P)+ r_\T(P)+\psi_G(h(P))
\end{gather}
for all $P=(x:1-x:1)$ in\/ $K$ such that $\sigma x\not\in G_v$ for every $v\in V$ and every embedding $\sigma\colon K\rightarrow\C_v$.

\begin{remark}
In~\eqref{E:abc general position} a function $\psi_G$ must be known for every system of neighborhoods of $\T$.
On the other hand,
the set of valuations $V$ is fixed and finite.
For Mochizuki's proof of the abc conjecture,
the only conditions are at the archimedean and the two-adic valuations.
\end{remark}

The following lemma is the {\em``claim''\/} of~\cite[page 12]{M}.
It proves that a weaker abc conjecture suffices to effectively find points on the Fermat curve (or on any curve of genus at least $2$).
Given $\eps$,
we use the Fermat curve with exponent~\eqref{E:n}.
We also assume that $\eps<1$,
so that $n\geq9$.
We use $h_\omega$,
the height of a point on $\Fermat$ with respect to a canonical divisor~\eqref{E:omega}.

\begin{lemma}\label{L:D=0}
Let\/ $K$ be a number field and assume~\eqref{E:abc general position}.
For all but finitely many points\/ $x$ on the Fermat curve over\/ $K,$
\begin{gather}\label{E:Mordell}
h_\omega(x)\leq\ld(x)+(\eps/2) h_\omega(x).
\end{gather}
\end{lemma}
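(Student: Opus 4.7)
The plan is to push a Fermat point $(x_0,y_0)$ down to $\P^1$ via the Belyi map $\phi(x_0,y_0)=x_0^n$ of \eqref{E:phi Fermat}, apply the restricted abc hypothesis \eqref{E:abc general position} to the image $P=\phi(x_0,y_0)$, and translate the resulting bound back to $\Fermat$ using the Chevalley--Weil lemmas of Section~5. Since $\phi^\ast\T = nD'$ with $D' = \phi^{-1}\T$ reduced of degree $3n$, the functoriality \eqref{E:h phi} gives $h_{D'}(x)=\tfrac{3}{n}h(P)$ and $h_{\phi^\ast(\infty)}(x)=h(P)$, so \eqref{E:omega} yields the key identity
\[
h_\omega(x)=h(P)-h_{D'}(x)=\frac{n-3}{n}\,h(P),
\]
while \eqref{E:r D'} bounds the pullback radical as $r_D(x)=r_{D'}(x)\le 3n+\tfrac{3}{n}h(P)$.

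Fix any admissible system $G$ of neighborhoods of $\T$, so that \eqref{E:abc general position} supplies a function $\psi_G(h)=o(h)$ with
\[
h(P)\le\ld(P)+r_\T(P)+\psi_G(h(P))
\]
for every Fermat point $x$ whose image lies in general position, i.e.\ $\sigma P\notin G_v$ for all $v\in V$ and all embeddings $\sigma$. Lemma~\ref{L:C P} bounds $\ld(P)+r_\T(P)\le\ld(x)+r_D(x)+M$, and substituting the estimates of the first paragraph gives
\[
h(P)\le\ld(x)+(3n+M)+\tfrac{3}{n}h(P)+\psi_G(h(P)).
\]
Rearranging and using $h(P)=\tfrac{n}{n-3}h_\omega(x)$ produces
\[
h_\omega(x)\le\ld(x)+(3n+M)+\psi_G\!\bigl(\tfrac{n}{n-3}\,h_\omega(x)\bigr).
\]

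Since $\psi_G(h)=o(h)$, the error on the right drops below $(\eps/2)h_\omega(x)$ once $h_\omega(x)$ exceeds a threshold depending only on $\eps$, $n$, $M$, and $G$. The assumption $n\ge 9$ makes $\omega$ of positive degree on $\Fermat$, so Northcott's theorem leaves only finitely many points of $\Fermat(K)$ beneath this threshold. The step I expect to require the most care is the general position hypothesis in the second paragraph: a Fermat point can satisfy $\sigma(x_0^n)\in G_v$ precisely when $\sigma x_0$ or $\sigma y_0$ is $v$-adically close to $0$ (via the Fermat equation $x_0^n+y_0^n=1$), and this can occur at arbitrarily large height in $\Fermat(K)$. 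Absorbing these exceptions into the ``finitely many'' clause is the delicate point; the natural route is to pass, if necessary, to a different Belyi map on $\Fermat$ chosen from the uniform sequence of Section~\ref{S:uniform Belyi} so that its image avoids $G_v$ at the offending points, or to shrink $G_v$ adaptively while keeping $n$ and $\eps$ fixed so that the bad Fermat points become a finite set.
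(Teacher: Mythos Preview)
Your height computations are correct, and you have correctly identified the crux: with the fixed Fermat Belyi map $\phi(x,y)=x^n$ and any fixed system $G$, infinitely many $K$-points of $\Fermat$ can land in $G_v$ (take $x_0$ $v$-adically close to $0$), so the general position hypothesis fails on an infinite set. Neither of your proposed repairs closes this gap. Shrinking $G_v$ does not help, since for every choice of $G_v$ there remain infinitely many bad points; and the uniform Belyi sequence of Section~\ref{S:uniform Belyi} only separates \emph{finite} exceptional sets, whereas here the exceptional set is infinite. In either repair the function $\psi_G$ (and the constant $M$ from Lemma~\ref{L:C P}) would depend on the point, and no uniform height threshold emerges.

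The missing idea, which is exactly how the paper proceeds, is to argue by contradiction. Assuming infinitely many counterexamples $x_m$, one uses the local compactness of $\Fermat(K_w)$ for each $w$ above a $v\in V$ to pass to a subsequence with limits $c_{v\circ\sigma}\in\Fermat(\C_v)$. Only now does one choose the Belyi map: by the noncritical Belyi construction of Section~\ref{S:Belyi} there is a single $\phi\colon\Fermat\to\P^1$ sending none of the finitely many \emph{algebraic} limit points to $\T$ (transcendental limits are automatically off $\T$), and then one chooses $G$ so small that $\phi(c_{v\circ\sigma})\notin G_v$. For this one $\phi$ and one $G$, almost all $x_m$ are in general position, and your computation (with $D=\phi^*\T$ for this $\phi$, not the Fermat map) yields $\psi_G(h(P_m))\ge c\,h_\omega(x_m)-O(1)$ for a positive constant $c$, contradicting $\psi_G(h)=o(h)$. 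The order of quantifiers---first the bad sequence, then $\phi$, then $G$---is the whole point.
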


\begin{proof}
If the inequality is false,
then there exists an infinite sequence of counter examples,
\begin{gather}\label{E:sequence}
h_\omega(x_n)>\ld(x_n)+(\eps/2) h_\omega(x_n).
\end{gather}
For a subsequence,
$\sigma x_n$ converges in $\C_v$,
say to $c_{v\circ\sigma}$,
for every $v\in V$ and every embedding of $K$ in $\C_v$.
Of these limit points,
some may be algebraic.
By the result of Section~\ref{S:Belyi},
we can find a Belyi map $\phi$ that maps the algebraic limit points not to $\T$.
The transcendental points are also not mapped to $\T$,
hence we can find compact neighborhoods~$G_v$ of~$\T$ such that $c_{v\circ\sigma}\not\in G_v$.
Then $\sigma x_n\not\in G_v$ for all but finitely many $n$.
By the restricted abc conjecture~\eqref{E:abc general position} applied to $P_n=(\phi x_n:1-\phi x_n:1)$,
\[
h(P_n)\leq\ld(P_n)+r_\T(P_n)+\psi_G(h(P_n))
\]
for all but finitely many $n$.
Let $D=\phi^*\T$.
By Lemma~\ref{L:C P},
there exists an~$M$ such that
\[
h(P_n)\leq\ld( x_n)+r_D( x_n)+M+\psi_G(h(P_n)).
\]
Let $D'$ be the reduced divisor of $D$.
By~\eqref{E:r D'},
$r_D(x_n)\leq \deg D'+h_{D'}(x_n)$.
Further,
$h_{D'}(x_n)\leq h(P_n)-h_\omega(x_n)$
by~\eqref{E:omega} and~\eqref{E:h phi}.
We find
\[
h_\omega(x_n)\leq\ld( x_n)+\deg D'+M+\psi_G(h(P_n)).
\]
Since by~\eqref{E:sequence},
$h_\omega(x_n)-\ld( x_n)>(\eps/2)h_\omega(x_n)$,
 this implies that
\[
\psi_G(h(P_n))>(\eps/2) h_\omega(x_n)-\deg D'-M.
\]
Since $h_\omega(x_n)\geq \frac12h(P_n)$ for Fermat curves of exponent at least $6$,
this contradicts that $\psi_G(h)=o(h)$.
We conclude that an infinite sequence such as~\eqref{E:sequence} does not exist.
\end{proof}

\begin{remark}\label{R:effective?}
The error term $\eps h+M$ in Theorem~\ref{T:M}
is ineffectively of order $o(h)$,
even if~\eqref{E:abc general position} would be known with an effective error term~$\psi_G(h)$.
The reason
is that different Fermat curves require different Belyi maps in the proof of Lemma~\ref{L:D=0},
and the determination of $M(\eps)$ becomes very complicated as~\mbox{$\eps\to0$} (and hence~$n\to\infty$).

By Section~\ref{S:uniform Belyi},
there is a finite number of Belyi maps that can be used,
the number depending on $\eps$ and $V$.
This could make our result effective.
One problem is that the transcendental limit points may be mapped arbitrarily close to $\T$.
\end{remark}

\begin{corollary}
Let\/ $d\geq1$ be a degree and assume~\eqref{E:abc general position}.
Then~\eqref{E:Mordell} holds
for all but finitely many points on the Fermat curve defined over any number field of degree at most\/ $d$ over\/ $\Q$.
\end{corollary}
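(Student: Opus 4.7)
The plan is to run the proof of Lemma~\ref{L:D=0} essentially verbatim, but with the single field $K$ replaced by a sequence of fields $K_n$ of bounded degree, and the compactness of $\P^1(\C)$ replaced by the compactness of $\P^1(L_v)$ for a suitable finite extension $L_v$ of $\Q_v$. Suppose \eqref{E:Mordell} failed for infinitely many points $x_n\in\Fermat(K_n)$ with $[K_n:\Q]\leq d$. First I would pass to a subsequence on which $[K_n:\Q]=d'$ is constant and fix an arbitrary ordering $\sigma^{v,n}_1,\dots,\sigma^{v,n}_{d'}$ of the embeddings $K_n\hookrightarrow\C_v$ for each $v\in V$.

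The key new ingredient is Remark~\ref{R:finite degree}: for every $v\in V$ there is a locally compact (in fact finite) extension $L_v$ of $\Q_v$ that contains every subfield of $\C_v$ of degree at most $d$ over $\Q_v$. Hence the entire sequence $\sigma^{v,n}_j(x_n)$ lives in the compact space $\Fermat(L_v)\subset\P^2(L_v)$, and a diagonal extraction yields a further subsequence on which $\sigma^{v,n}_j(x_n)\to c^v_j\in\Fermat(L_v)$ for each $v\in V$ and each $j\leq d'$. The set of algebraic limits $c^v_j$ contains at most $|V|\cdot d'$ points, so the construction of Section~\ref{S:Belyi} produces a Belyi map $\phi\colon\Fermat\to\P^1$ sending no algebraic $c^v_j$ into $\T$. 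Compact neighborhoods $G_v$ of $\T$ in $\P^1(L_v)$ may then be chosen disjoint from the finite set $\{\phi(c^v_j)\}$, so that $\phi(\sigma^{v,n}_j x_n)\notin G_v$ for all sufficiently large $n$.

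With those choices made, the remainder of the argument is literally the chain of estimates from the proof of Lemma~\ref{L:D=0}: apply \eqref{E:abc general position} to $P_n=(\phi x_n:1-\phi x_n:1)$; bound $r_D(x_n)\leq\deg D'+h_{D'}(x_n)$ via \eqref{E:r D'}; translate $h_{D'}(x_n)\leq h(P_n)-h_\omega(x_n)$ via \eqref{E:omega} and \eqref{E:h phi}; invoke Lemma~\ref{L:C P}; and finally use $h_\omega(x_n)\geq\frac12 h(P_n)$, valid once the Fermat exponent is at least $6$, together with $\psi_G(h)=o(h)$, to contradict the assumed failure of \eqref{E:Mordell}.

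The main obstacle is producing a single compact ambient space for the varying completions $(K_n)_v$; that is precisely what Remark~\ref{R:finite degree} supplies, and it is the reason a uniform bound $d$ on the degrees is essential. A subsidiary point is that the function $\psi_G$ furnished by \eqref{E:abc general position} should be uniform in $K_n$; this is automatic if \eqref{E:abc general position} is read as a statement about algebraic points, since then the absolute height, the absolute radical and the logarithmic different do not depend on a particular field of definition.
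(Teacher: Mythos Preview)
Your proposal is correct and follows essentially the same approach as the paper's proof: both invoke Remark~\ref{R:finite degree} to embed all the relevant completions into a single locally compact field $L_v$ for each $v\in V$, thereby reducing to finitely many embeddings and allowing the compactness argument of Lemma~\ref{L:D=0} to go through unchanged. You have simply spelled out the diagonal extraction and the handling of the varying fields $K_n$ more explicitly than the paper, which dispatches the matter in two sentences.
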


\begin{proof}
By Remark~\ref{R:finite degree},
for each $v\in V$ there exists a number field $K$,
of degree bounded by a function of $d$,
 such that $K_v$ contains every number field of degree at most $d$ over $\Q$.
Thus for every $v\in V$,
there are finitely many embeddings to consider,
and the proof of Lemma~\ref{L:D=0} applies.
\end{proof}

\begin{theorem}\label{T:M}
Let\/ $K$ be a number field and\/ $\eps>0$.
Assume the restricted abc conjecture~\eqref{E:abc general position}.
Then there exists an\/ $M$ such that for every point\/ $P=(a:1-a:1)$ in\/ $K,$
\[
h(P)\leq\ld(P)+ r(P)+\eps h(P)+M.
\]
\end{theorem}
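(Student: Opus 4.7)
The plan is to lift each abc sum to a point on the Fermat curve, apply Lemma~\ref{L:D=0} there, and descend via the Chevalley--Weil estimate of Lemma~\ref{L:C P e}.  I fix $n$ as in \eqref{E:n} (assuming $\eps<1$, so $n\ge9$ and $3/n\le\eps/2$) and work with the Belyi cover $\phi\colon\Fermat\to\P^1$, $(x,y)\mapsto x^n$, of degree $n^2$, ramified to order $n$ at each point above $\T$.  Given $P=(a:1-a:1)$ in $K$ with $a\notin\T$, I choose $n$-th roots $x,y$ of $a,1-a$; the lift $x_0=(x,y)$ lies in $\Fermat(F)$ where $F=K(x,y)$ has $[F:\Q]\le d:=n^2[K:\Q]$.

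Setting this $d$, the corollary immediately following Lemma~\ref{L:D=0} ensures that $h_\omega(x_0)\le\ld(x_0)+(\eps/2)h_\omega(x_0)$ for all but finitely many $x_0$ across $\Fermat$ over number fields of degree at most $d$ over $\Q$.  Applying Lemma~\ref{L:C P e} and noting that $\ld(\phi x_0)=\ld(P)$ and $r_\T(\phi x_0)=r(P)$, I obtain
\[
(1-\eps/2)\,h_\omega(x_0)\le\ld(P)+r(P)+M_1.
\]

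To translate this into a bound on $h(P)$, I use $\omega=\phi^*(\infty)-D'$ with $D'=\phi^{-1}\T$ and the relation $\phi^*\T=nD'$.  The functoriality $h_{\phi^*D}=h_D\circ\phi$ and the normalization $h_D(x)=(\deg D)h(x:1-x:1)$ on $\P^1$ yield (up to bounded terms) $h_{\phi^*\infty}(x_0)=h(P)$ and $h_{D'}(x_0)=(3/n)h(P)$, hence $h_\omega(x_0)\ge(1-\eps/2)h(P)-M_0$.  Substituting produces $(1-\eps/2)^2h(P)\le\ld(P)+r(P)+M_2$, and since $(1-\eps/2)^2\ge1-\eps$, this gives $h(P)\le\ld(P)+r(P)+\eps h(P)+M_2$ for all but finitely many $P$.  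The remaining exceptional abc sums are absorbed by enlarging $M$: each $a$ has $n^2$ preimages under $\phi$, so only finitely many $a$ can have every preimage in the finite exceptional set.

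The main obstacle will be the height bookkeeping behind $h_\omega(x_0)=(1-3/n)h(P)+O(1)$.  While it follows formally from $\omega=\phi^*\infty-D'$ and $\phi^*\T=nD'$, one must verify that the chosen height functions are additive in divisors up to bounded error (in particular that $h_{nD'}=nh_{D'}+O(1)$).  This is standard height theory but notationally delicate, and is the only step where the precise choice of representative heights for $\phi^*\infty$ and $D'$ affects the estimate.
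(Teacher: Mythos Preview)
Your argument is correct and follows the same route as the paper: lift $P$ to the Fermat curve via $\phi(x,y)=x^n$, invoke the Mordell-type bound of Lemma~\ref{L:D=0} (in its bounded-degree corollary form), descend the discriminant term by Lemma~\ref{L:C P e}, and compare $h(P)$ with $h_\omega$ using $\omega=\phi^*(\infty)-D'$ and $\phi^*\T=nD'$.

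The only differences are cosmetic. The paper phrases the height comparison as $h(P)\le(1+\eps/2)h_\omega(x)$ (from $n/(n-3)\le1+\eps/2$, i.e.\ exactly $n\ge3+6/\eps$) and then uses $h_\omega(x)\le h(P)$ to land on $\eps h(P)$ directly, whereas you use $h_\omega\ge(1-\eps/2)h(P)$ and the inequality $(1-\eps/2)^2\ge1-\eps$. You are also more explicit on two points the paper glosses over: that the lifts live in \emph{varying} extensions of degree $\le n^2[K:\Q]$, so one really needs the corollary rather than Lemma~\ref{L:D=0} for a single field; and that the finitely many exceptional lifts are absorbed into~$M$. Your caveat about additivity of heights ($h_{nD'}=nh_{D'}+O(1)$) is well placed---the paper uses the same fact without comment in both Lemma~\ref{L:D=0} and Theorem~\ref{T:M}.

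One small clean-up: your final sentence about ``every preimage in the finite exceptional set'' is more than you need. Since the finitely many exceptional points on~$\Fermat$ have finitely many images under $\phi$, and your chosen lift $x_0$ satisfies $\phi(x_0)=a$, only finitely many $a$ can arise from an exceptional $x_0$.
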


\begin{proof}
Cover $\P^1$ by the Belyi map~\eqref{E:phi Fermat}.
The point $P$ lifts to a point $x$,
defined over an extension of $K$ of degree at most $n^2$.
By the choice~\eqref{E:n} of $n$,
the degree of~$\phi$ is bounded by~$(2g-2)(1+\eps/2)$,
and by~\eqref{E:h phi} and~\eqref{E:omega},
\[
h(P)\leq h_\omega(x)+(\eps/2) h_\omega(x).
\]
We use Lemma~\ref{L:D=0} to obtain $h(P)\leq\ld(x)+\eps h_\omega(x)$.
To estimate the error term,
we use
$ h_\omega(x)\leq h(P)$.
It follows that
\[
h(P)\leq\ld(x)+\eps h(P).
\]
By Lemma~\ref{L:C P e},
we conclude that
$
h(P)\leq\ld(P)+r_\T(P)+\eps h(P)+M.
$
\end{proof}

Again,
the number field $K$ is inessential,
and we can prove the same theorem for all number fields of bounded degree.

\begin{corollary}
Let\/ $d\geq1$ be a degree and\/ $\eps>0$.
Assume~\eqref{E:abc general position} for all number fields of degree at most\/ $dn^2,$
where\/ $n\geq3+6/\eps$.
Then there exists an\/ $M$ such that
\[
h(P)\leq\ld(P)+ r(P)+\eps h(P)+M
\]
for every point\/ $P=(a:1-a:1)$ of degree at most\/ $d$ over\/ $\Q$.
\end{corollary}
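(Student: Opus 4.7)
The plan is to re-run the proof of Theorem~\ref{T:M} essentially verbatim, making one substitution: I would invoke the corollary of Lemma~\ref{L:D=0} in place of Lemma~\ref{L:D=0} itself. This substitution is exactly what the strengthened hypothesis~\eqref{E:abc general position} over degree $dn^2$ permits, since lifting along the Fermat Belyi map of degree $n^2$ raises the field of definition by a factor at most $n^2$.

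Concretely, given $P=(a:1-a:1)$ of degree at most $d$ over $\Q$, I would fix a field of definition $K$ with $[K:\Q]\leq d$, choose $n\geq 3+6/\eps$, and lift $P$ along the Belyi map~\eqref{E:phi Fermat} to a point $x$ on the Fermat curve $\Fermat$ of exponent $n$. The lift is defined over a number field of degree at most $dn^2$ over $\Q$. I would then apply the corollary of Lemma~\ref{L:D=0} with degree parameter $dn^2$ to deduce
\[
h_\omega(x)\leq\ld(x)+(\eps/2)h_\omega(x)
\]
for all but finitely many such $x$. From $\deg\phi=n^2\leq(2g-2)(1+\eps/2)$ together with the identities~\eqref{E:h phi} and~\eqref{E:omega} one obtains $h(P)\leq(1+\eps/2)h_\omega(x)$, and combining this with the trivial bound $h_\omega(x)\leq h(P)$ yields $h(P)\leq\ld(x)+\eps h(P)$. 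A final application of Lemma~\ref{L:C P e} converts $\ld(x)$ into $\ld(P)+r_\T(P)+M$, producing the desired inequality.

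The one point requiring new care, and what I expect to be the main (and only) obstacle, is verifying that the corollary of Lemma~\ref{L:D=0} genuinely applies uniformly across the infinite family of number fields of degree at most $dn^2$. This is precisely what Remark~\ref{R:finite degree} supplies: for every $v\in V$, the union of all such fields embeds into a single finite extension of $\Q_v$, so at each $v$ only finitely many embeddings $\sigma\colon F\to\C_v$ need to be controlled when choosing the neighborhoods $G_v$ that avoid the accumulation points. The compactness argument of Lemma~\ref{L:D=0} then carries over unchanged. The finitely many exceptional lifts $x$ give rise to only finitely many exceptional $P$, which can be absorbed into $M$; everything else is mechanical repetition of the proof of Theorem~\ref{T:M}.
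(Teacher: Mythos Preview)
Your proposal is correct and matches the paper's approach exactly. The paper does not even write out a separate proof for this corollary; it simply remarks that ``the number field $K$ is inessential, and we can prove the same theorem for all number fields of bounded degree,'' which amounts precisely to re-running the proof of Theorem~\ref{T:M} with the corollary of Lemma~\ref{L:D=0} (with degree parameter $dn^2$, justified via Remark~\ref{R:finite degree}) in place of Lemma~\ref{L:D=0} itself.
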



\begin{thebibliography}{8}
\bibitem{E}	Ellenberg, J.,
\textit{Congruence ABC implies ABC},
Indag.\ Math.\ N.S.~\textbf{11} (2) (2000),
197--200.

\bibitem{Hayman}	Hayman, W.,
\textit{Meromorphic Functions},
Oxford University Press, 1975.

\bibitem{Belyi}	Mochizuki, S.,
\textit{Noncritical Belyi Maps},
Math.\ J.\ Okayama Univ.~\textbf{46} (2004), 105--113.

\bibitem{M}	Mochizuki, S.,
\textit{Arithmetic Elliptic Curves in General Position},
Math.\ J.\ Okayama Univ.~\textbf{52} (2010),
1--28.

\bibitem{LC}	Lang, S., Cherry, W.,
\textit{Topics in Nevanlinna theory},
Lecture Notes in Mathematics \textbf{1433} (1980).

\bibitem{Neukirch}	Neukirch, J.,
\textit{Algebraic Number Theory},
Grundlehren der mathematischen Wissenschaften \textbf{322}, Springer-Verlag.

\bibitem{O}	Oesterl\'e, J.,
\textit{Nouvelle approches du th\'eor\`eme de Fermat},
Ast\'erisque 161--162 (1988), 165--186.

\bibitem{SZ}	Scherr, Z., and Zieve, M.\ E.,
\textit{Separated Belyi Maps},
Mathematical Research Letters~\textbf{21} (2014),
1389--1406.

\bibitem{ST}	Stewart, C.\ L., Tijdeman, R.,
\textit{On the Oesterl{\'e}-Masser conjecture},
{Monatshefte f\"ur Mathematik} \textbf{102} (1986), 251--257.

\bibitem{SY}	Stewart, C.\ L., Yu, K.,
\textit{On the $abc$ conjecture},
{Math.\ Ann.} \textbf{291} (1991), 225--230.

\bibitem{goodabc}	van Frankenhuijsen, M.,
\textit{A Lower Bound in the ABC Conjecture},
 J.\ Number Theory \textbf{82} (2000), 91--95.

\bibitem{abcvhi}	van Frankenhuijsen, M.,
\textit{The ABC conjecture implies Vojta's Height Inequality for Curves},
J.\ Number Theory \textbf{95} (2002), 289--302.

\bibitem{meromorphic}	van Frankenhuijsen, M.,
\textit{The abc theorem for meromorphic functions},
preprint \texttt{arXiv:0805.1729}, 2008.

\bibitem{vF}	van Frankenhuijsen, M.,
\textit{ABC implies the Radicalized Vojta Height Inequality for Curves},
J.\ Number Theory \textbf{127} (2007), 292--300.

\bibitem{V}	Vojta, P.,
Diophantine approximations and value distribution theory,
\textit{Lecture Notes in Mathematics} \textbf{1239}, Springer-Verlag, 1987.

\end{thebibliography}
\end{document}